\documentclass[12 pt, reqno]{amsart}
\usepackage{mathrsfs}
\usepackage{graphicx}
\usepackage{amsmath}
\usepackage{amsfonts}
\usepackage{amssymb}
\usepackage{hyperref} 
\newtheorem{Exam.}{Exam.}

\newtheorem{definition}{Definition}

\newtheorem{theorem}{Theorem}
\newtheorem{remark}{Remark}
\newtheorem{lemma}[theorem]{Lemma}
\newtheorem{proposition}[theorem]{Proposition}
\newtheorem{corollary}[theorem]{Corollary}
\newtheorem{conj}[theorem]{Conjecture}

\def\L{\mathscr{L}}

\newcommand{\bea}{\begin{eqnarray}}
\newcommand{\eea}{\end{eqnarray}}
\usepackage{amsthm,amsmath,amssymb,url,cite,color}

\def\red{\textcolor{red}}

\numberwithin{equation}{section}

\renewcommand{\o}{\omega}
\renewcommand{\O}{\Omega}
\newcommand{\lp}{<_P}

\def\JSP{\mathcal{JSP}}
\def\LSP{\mathcal{LSP}}
\DeclareMathOperator{\JS}{JS}
\DeclareMathOperator{\js}{js}
\DeclareMathOperator{\LS}{LS}
\DeclareMathOperator{\ls}{ls}
\DeclareMathOperator{\des}{des}

\renewcommand{\L}{\mathscr{L}}

\def\N{\mathbb N}

\marginparwidth 0pt \marginparsep 0pt
\oddsidemargin +0.1in \evensidemargin 0pt
\topmargin -.3in
\textwidth 6.3in
\textheight 8.0in

\begin{document}

\title{Jacobi-Stirling polynomials and $P$-partitions}
\author{Ira M. Gessel}
\address[Ira M. Gessel]{Department of Mathematics, Brandeis University, Waltham, MA 02453-2728, USA}
\email{gessel@brandeis.edu}

\author{Zhicong Lin}
\address[Zhicong Lin]{Department of Mathematics and Statistics, Lanzhou University, China, and  Universit\'{e} de Lyon; Universit\'{e} Lyon 1; Institut Camille Jordan; UMR 5208 du CNRS; 43, boulevard du 11 novembre 1918, F-69622 Villeurbanne Cedex, France}
\email{lin@math.univ-lyon1.fr}

\author{Jiang Zeng}
\address[Jiang Zeng]{Universit\'{e} de Lyon; Universit\'{e} Lyon 1; Institut Camille Jordan; UMR 5208 du CNRS; 43, boulevard du 11 novembre 1918, F-69622 Villeurbanne Cedex, France}
\email{zeng@math.univ-lyon1.fr}

\thanks{Ira Gessel's research is partially supported by NSA grant
H98230-10-1-0196.}
\thanks{Jiang Zeng's research is partially supported by the research project ANR-08-BLAN-0243-03.}
\date{\today}

\begin{abstract} 
We  investigate the diagonal generating function of 
the Jacobi-Stirling numbers  of the second kind $ \JS(n+k,n;z)$  by
generalizing  the analogous  results  for the Stirling and  Legendre-Stirling numbers.  More precisely,
letting
$\JS(n+k,n;z)=p_{k,0}(n)+p_{k,1}(n)z+\dots+p_{k,k}(n)z^k$, 
 we  show that $(1-t)^{3k-i+1}\sum_{n\geq0}p_{k,i}(n)t^n$ is  a polynomial in $t$ with nonnegative integral coefficients and 
 provide  combinatorial interpretations of the coefficients by  using Stanley's theory of $P$-partitions.  

 \end{abstract}
\keywords{Legendre-Stirling numbers, Jacobi-Stirling numbers, Jacobi-Stirling polynomials, $P$-partitions,  posets, order polynomials.}
\maketitle

\section{Introduction}
Let $\ell_{\alpha,\beta}[y](t)$ be the Jacobi differential operator:
$$\ell_{\alpha,\beta}[y](t)=\frac{1}{(1-t)^\alpha(1+t)^\beta}\left( -(1-t)^{\alpha+1}(1+t)^{\beta+1} y'(t)\right)' .$$
It is well known  that the Jacobi polynomial $y=P_{n}^{(\alpha,\beta)}(t)$ is an  eigenvector 
for the differential operator $\ell_{\alpha,\beta}$ corresponding to 
$n(n+\alpha+\beta+1)$, i.e., 
$$\ell_{\alpha,\beta}[y](t)=n(n+\alpha+\beta+1)y(t).$$
For each $n \in \N$, the Jacobi-Stirling numbers  $\JS(n,k;z)$  of the second kind appeared  originally 
as the coefficients in the 
expansion of the $n$-th composite power of $\ell_{\alpha,\beta}$ (see \cite{ev}):
$$(1-t)^\alpha(1+t)^\beta \ell_{\alpha,\beta}^n[y](t)=
\sum\limits_{k=0}^{n} (-1)^k \JS(n,k;z)\left( (1-t)^{\alpha+k}(1+t)^{\beta+k} y^{(k)}(t)\right)^{(k)},$$
where   $z=\alpha+\beta+1$, and 
can also be  defined as the connection coefficients in
\begin{equation}\label{eq:JS}
x^n=\sum_{k=0}^{n}\JS(n,k;z)\prod_{i=0}^{k-1}(x-i(z+i)).
\end{equation}
The Jacobi-Stirling numbers $\js(n,k;z)$ of the first kind are defined by
\begin{equation}\label{eq:js}
\prod_{i=0}^{n-1}(x-i(z+i))=\sum_{k=0}^{n}\js(n,k;z)x^k.
\end{equation}
When $z=1$, the Jacobi-Stirling numbers become
the \emph{Legendre-Stirling numbers} \cite{ev2} of
the first and second kinds:
\begin{align}\label{defLS}
 \ls(n,k)=\js(n,k;1), \quad \LS(n,k)=\JS(n,k;1).
\end{align}
Generalizing the work of Andrews and Littlejohn~\cite{al} on Legendre-Stirling numbers, Gelineau and Zeng~\cite{gz}
studied the combinatorial interpretations of the Jacobi-Stirling numbers and remarked on the connection with  
Stirling numbers and central factorial numbers. 
Further properties of the Jacobi-Stirling numbers have been given by  Andrews, Egge,  Gawronski, and Littlejohn \cite{aegl}.

The Stirling numbers of the second and  first kinds  $S(n,k)$  and $s(n,k)$
  are defined by
\begin{align}\label{eq:stirling}
x^n=\sum_{k=0}^nS(n,k) \prod_{i=0}^{k-1}(x-i),\qquad
\prod_{i=0}^{n-1}(x-i)=\sum_{k=0}^ns(n,k)x^k.
\end{align}
The lesser known central factorial numbers \cite[p.~213--217]{ri} 
$T(n,k)$ and $t(n,k)$ are defined   by
\begin{align}\label{eq:cf2}
x^n=\sum_{k=0}^nT(n,k)\,x\prod_{i=1}^{k-1}\left(x+\frac{k}{2}-i\right),
\end{align}
and
\begin{align}\label{eq:cf1}
x\prod_{i=1}^{n-1}\left(x+\frac{n}{2}-i\right)=\sum_{k=0}^nt(n,k)x^k.
\end{align}

Starting from the fact  that for fixed $k$,  the Stirling number $S(n+k,n)$ can be written as  a polynomial in $n$ of degree $2k$ and there exist  nonnegative integers $c_{k,j}$, $1\leq j\leq k$, such that 
 \begin{align}
 \sum_{n\geq0} S(n+k,n)t^n&=\frac{\sum_{j=1}^k c_{k,j}t^{j}}{(1-t)^{2k+1}},
 \end{align}
 Gessel and Stanley~\cite{gs}  gave a combinatorial interpretation for the $c_{k,j}$ in terms of the descents in \emph{Stirling permutations}.
Recently, Egge \cite{eg} has given an analogous result for the Legendre-Stirling numbers, and 
Gelineau~\cite{ge} has made a preliminary study of the analogous problem for Jacobi-Stirling numbers.
In this paper, we will prove some  analogous  results for the diagonal generating function of Jacobi-Stirling numbers.  
As noticed in \cite{gz}, the leading coefficient of the polynomial
$\JS(n,k;z)$ is $S(n,k)$ 
and the constant term of $\JS(n,k;z)$ is the central factorial number of the second  kind
 with even indices $T(2n,2k)$.
Similarly,  the leading coefficient of the polynomial
$\js(n,k;z)$ is  $s(n,k)$ and
the constant term of  $\js(n,k;z)$ is the  central factorial number of the first kind
 with even indices $t(2n,2k)$.

 \begin{definition}
The \emph{Jacobi-Stirling polynomial} of the second kind is defined by
\begin{align}\label{dp}
 f_k(n;z):=\JS(n+k,n;z).
\end{align}
The  coefficient $p_{k,i}(n)$ of $z^i$ in $ f_k(n;z)$ is called the \emph{Jacobi-Stirling coefficient} of the second kind for $0\leq i\leq k$. Thus
\begin{equation}\label{e-f1}
f_k(n;z)=p_{k,0}(n)+p_{k,1}(n)z+\dots+p_{k,k}(n)z^k.
\end{equation}
\end{definition}

The main goal of  this paper is to prove Theorems \ref{theorem1} and \ref{main1} below.

\begin{theorem}
\label{theorem1}\label{thm1}
For each integer  $k$ and $i$ such that $0\leq i\leq k$, 
there is a polynomial 
$A_{k,i}(t)=\sum_{j=1}^{2k-i} a_{k,i,j}t^j$ with 
positive integer coefficients   
such that 
\begin{equation}\label{eqa0}
\sum_{n\geq0}p_{k,i}(n)t^n=\frac{A_{k,i}(t)}{(1-t)^{3k-i+1}}.
\end{equation}
\end{theorem}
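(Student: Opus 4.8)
The plan is to realize the Jacobi-Stirling coefficient $p_{k,i}(n)$ as an order polynomial (or a sum of order polynomials) of a suitable labeled poset, and then invoke Stanley's $P$-partition reciprocity to deduce the rational form of the generating function with a denominator that is a power of $(1-t)$. First I would go back to the defining relation \eqref{eq:JS}. Writing $x^{n+k}=\sum_{j}\JS(n+k,j;z)\prod_{i=0}^{j-1}(x-i(z+i))$ and extracting the coefficient of the ``diagonal'' term $j=n$, one gets a recurrence
\begin{equation*}
f_k(n;z)=\JS(n+k,n;z)=n(z+n)\,\JS(n+k-1,n;z)+\JS(n+k-1,n-1;z),
\end{equation*}
i.e. $f_k(n;z)=n(z+n)f_{k-1}(n;z)+f_k(n-1;z)$, with $f_0(n;z)=1$. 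Iterating this, $f_k(n;z)$ is a sum over lattice paths / sequences $1\le m_1\le m_2\le\cdots$ of products of factors $m(z+m)=m^2+mz$. Expanding each such factor and collecting the coefficient of $z^i$ shows that $p_{k,i}(n)$ is a positive integer combination of sums of the form $\sum m_{j_1}m_{j_2}\cdots$ over weakly increasing sequences bounded by $n$ — exactly the kind of sum that counts $P$-partitions of a fixed labeled poset bounded by $n$, hence an order polynomial evaluated near $n$.

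Concretely, I would build, for each $k$ and $i$, a finite family of labeled posets $(P,\omega)$ on $3k-i$ elements (the $2(k-i)$ ``squared'' factors each contribute a doubled chain element carrying weight, the $i$ chosen linear factors contribute single elements, and one keeps track of the ``$\le n$'' bound via an extra comparison to a maximal element, accounting for the shift in the exponent), such that
\begin{equation*}
p_{k,i}(n)=\sum_{(P,\omega)}\Omega_{(P,\omega)}(n+c_{P})
\end{equation*}
for appropriate constants $c_P$, where $\Omega$ denotes the order polynomial. The degree bookkeeping is what produces the exponent $3k-i+1$: a labeled poset on $m$ elements has $\sum_n\Omega_{(P,\omega)}(n)\,t^n=W_{(P,\omega)}(t)/(1-t)^{m+1}$ with $W_{(P,\omega)}$ the $P$-Eulerian polynomial, a polynomial in $t$ with nonnegative integer coefficients and $W(0)=0$; here $m=3k-i$, giving denominator $(1-t)^{3k-i+1}$, and summing finitely many such contributions (and shifting the argument, which only multiplies the numerator by powers of $t$ and $(1-t)$ and preserves nonnegativity after clearing) yields \eqref{eqa0} with $A_{k,i}(t)=\sum_j a_{k,i,j}t^j$ having nonnegative integer coefficients.

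Two points then remain. The first, and I expect the main obstacle, is pinning down the precise labeled poset so that the order-polynomial count reproduces $p_{k,i}(n)$ \emph{on the nose} (not merely up to lower-order corrections): one must choose the labeling $\omega$ so that the strict/weak inequalities in the $P$-partition match the combinatorics of ``weakly increasing with possible repeats'' coming from the recurrence, and one must correctly absorb the $f_k(n-1;z)$ branch of the recurrence (which is where the extra $(1-t)$ factors, versus the naive $(1-t)^{2k+1}$ for Stirling, come from — the $i$ linear factors each save one power). The second is the \emph{strict positivity} of every coefficient $a_{k,i,j}$ for $1\le j\le 2k-i$, which is stronger than the nonnegativity that $P$-partition theory gives for free; for this I would argue, as in Gessel--Stanley, that the relevant poset's $P$-Eulerian polynomial has full support by exhibiting, for each target number of descents $j$ in the range, at least one linear extension (equivalently one $P$-partition statistic) achieving it — a direct construction of extremal permutations. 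I would also verify the claimed top degree $2k-i$ of $A_{k,i}$ by checking the extreme case and matching with the degree $2k$ of $\JS(n+k,n;z)$ as a polynomial in $n$.
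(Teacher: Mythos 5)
Your strategy is sound and would work, but it is genuinely different from the proof the paper actually gives for Theorem~\ref{thm1}; it is instead essentially the route the paper takes in Sections~3--4 to establish the finer Theorem~\ref{main1}. The paper's own proof of Theorem~\ref{thm1} is algebraic: Proposition~\ref{prop:1} shows $p_{k,i}(n)$ is a polynomial in $n$ of degree $3k-i$, which by standard facts on rational generating functions yields \eqref{eqa0} with a numerator of degree at most $3k-i$; Proposition~\ref{th:p} shows $p_{k,i}$ vanishes at $0,-1,\dots,-k$, which via the reciprocity formula \eqref{eq:inverse} forces $a_{k,i,j}=0$ for $j>2k-i$ (Lemma~\ref{lem5}); and strict positivity of the remaining coefficients comes from the explicit recurrence of Proposition~\ref{prop6} together with the induction in Corollary~\ref{cor7}. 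Your route --- iterate the diagonal recurrence to get \eqref{e-J2}, extract the coefficient of $z^i$ to write $p_{k,i}(n)=\sum_{|S|=i}p_{k,S}(n)$ as in \eqref{eqp}, and realize each $p_{k,S}(n)$ as an order polynomial of a labeled poset on $3k-i$ elements --- is exactly the construction of the Jacobi--Stirling posets $R_{k,S}$ and Lemma~\ref{lea}. What your approach buys is an immediate combinatorial meaning for the $a_{k,i,j}$ (Theorem~\ref{th:main}); what the paper's algebraic proof buys is that the two hard points you correctly flag come essentially for free.

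Two specific remarks on those hard points. First, no shift $c_P$ or auxiliary maximal element is needed: with the paper's labeling, all covering relations are weak, and $\Omega_{R_{k,S}}(n)$ equals $p_{k,S}(n)$ on the nose (each factor $j^2$ arises from the two unconstrained elements hanging below the chain element assigned the value $j$); introducing argument shifts would in fact endanger the nonnegativity you want to preserve. Second, $P$-partition theory only gives you nonnegativity and the denominator $(1-t)^{3k-i+1}$; both the vanishing of $a_{k,i,j}$ for $2k-i<j\le 3k-i$ and the strict positivity for $1\le j\le 2k-i$ require additional arguments that your proposal sketches but does not carry out. For the former, bounding the number of descents of every linear extension of $R_{k,S}$ by $2k-i-1$ directly is possible but fiddly, and the cleanest route is the paper's reciprocity argument from Proposition~\ref{th:p}; for the latter, an insertion argument in the spirit of Proposition~\ref{prop:12} or of the second proof of Theorem~\ref{main1} does produce linear extensions with every admissible number of descents. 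So your plan is viable, but as written it stops one step short on exactly the two assertions that distinguish Theorem~\ref{thm1} from a routine application of \eqref{ppa}. (A small slip: the top degree $2k-i$ is not read off from the degree of $\JS(n+k,n;z)$ in $n$, which is $3k$, not $2k$; it comes from the $k+1$ integer roots $0,-1,\dots,-k$ of $p_{k,i}$.)
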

In order to give a combinatorial interpretation for 
$a_{k,i,j}$, we introduce  the multiset
$$
M_k:=\{1,1,\bar{1}, 2, 2, \bar{2}, \dots, k, k, \bar{k}\},
$$ 
where the elements are ordered by  
\begin{align}
\bar{1}<1<\bar{2}<2\ldots<\bar{k}<k.
\end{align}
Let $[\bar{k}]:=\{\bar{1}, \bar{2}, \dots, \bar{k}\}$. 
For any  subset $S\subseteq [\bar k]$, we set 
 $M_{k,S}=M_k\setminus S$.

\begin{definition}
A  permutation  $\pi$ of $M_{k,S}$ is a Jacobi-Stirling permutation if whenever $u< v < w$ and $\pi(u)=\pi(w)$, we have $\pi(v)>\pi(u)$. 
We denote by $\JSP_{k,S}$ the set of  Jacobi-Stirling permutations of $M_{k,S}$ and 
$$
\JSP_{k,i}=\bigcup_{S\subseteq[\bar k]\atop |S|=i}\JSP_{k,S}.
$$
\end{definition}
For example, the Jacobi-Stirling permutations of $\JSP_{2,1}$ are: 
\begin{align*}
&22\bar{2}11, \;\bar{2}2211, \;\bar{2}1221,\; \bar{2}1122, \; 221\bar{2}1, \;122\bar{2}1, \;1\bar{2}221, 1\bar{2}122,\;
2211\bar{2}, \;1221\bar{2},\\
 &1122\bar{2}, \;11\bar{2}22,\;
2211\bar{1}, \;1221\bar{1},\; 1122\bar{1}, \;11\bar{1}22,\;
22\bar{1}11, \;\bar{1}2211, \;\bar{1}1221,\; \bar{1}1122.
\end{align*}
Let $\pi=\pi_1\pi_2\ldots\pi_m$ be a word on a totally ordered alphabet. We say that $\pi$ has a descent at $l$, where $1\le l \le m-1$,  if  $\pi_l>\pi_{l+1}$.
Let $\des\,\pi$ be the number of descents of $\pi$. The following is our main interpretation for the coefficients $a_{k,i,j}$.

\begin{theorem}\label{main1}
 For $k\geq 1,\,0\leq i\leq k$, and $1\leq j\leq 2k-i$, the coefficient $a_{k,i,j}$ is 
the number of Jacobi-Stirling permutations in $\JSP_{k,i}$ with $j-1$ descents.
\end{theorem}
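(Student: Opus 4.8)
The plan is to prove Theorem~\ref{main1} by combining the algebraic expansion~\eqref{eq:JS} with Stanley's theory of $(P,\omega)$-partitions, realizing the right-hand side of~\eqref{eqa0} as an order polynomial generating function whose numerator is an $h$-polynomial counted by descents of linear extensions. First I would record, from~\eqref{eq:JS} and the definition $f_k(n;z)=\JS(n+k,n;z)$, an explicit combinatorial formula for $f_k(n;z)$: since $x^{n+k}=\sum_{m}\JS(n+k,m;z)\prod_{j=0}^{m-1}(x-j(z+j))$, one inverts using the first-kind numbers, or more directly one uses the recurrence $\JS(n+1,k;z)=\JS(n,k-1;z)+k(k+z)\JS(n,k;z)$ to see that $\JS(n+k,n;z)$ counts certain labeled set-partition-like structures. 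Concretely, I expect to show that $f_k(n;z)$ is, up to the substitution $z\mapsto z$, a sum over ways of inserting $k$ ``extra'' blocks worth of structure into $n$ slots, each insertion contributing a factor from $\{1,1',\dots\}$ weighted by $z$ or not — this is exactly where the multiset $M_k=\{1,1,\bar1,\dots,k,k,\bar k\}$ with the order $\bar1<1<\bar2<2<\cdots$ comes from, the barred letters being the ones that carry the variable $z$ and hence get summed over subsets $S\subseteq[\bar k]$.

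Next I would set up the poset side. For each $S\subseteq[\bar k]$ with $|S|=i$, I would define a labeled poset $(P_{k,S},\omega)$ whose linear extensions are naturally in bijection with the Jacobi-Stirling permutations in $\JSP_{k,S}$, with the descent set of a permutation matching the descent set of the corresponding linear extension relative to $\omega$; the Jacobi-Stirling (``no bad pattern $uvw$ with $\pi(u)=\pi(w)>\pi(v)$'') condition is precisely the condition that encodes which words arise as linear extensions of a forest-like poset built by nesting the repeated letters. By Stanley's theory, $\sum_{m\ge0}\Omega_{(P_{k,S},\omega)}(m)\,t^m = \bigl(\sum_{\sigma}t^{1+\des\sigma}\bigr)/(1-t)^{|P_{k,S}|+1}$, where $\sigma$ runs over linear extensions and $|P_{k,S}|=3k-i$. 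Summing over all $S$ with $|S|=i$ then gives a generating function with denominator $(1-t)^{3k-i+1}$ and numerator $\sum_{\sigma\in\JSP_{k,i}}t^{1+\des\sigma}$. The remaining task is to identify $\sum_m \bigl(\sum_{|S|=i}\Omega_{(P_{k,S},\omega)}(m)\bigr)t^m$ with $\sum_n p_{k,i}(n)t^n$, i.e., to show $p_{k,i}(n)=\sum_{|S|=i}\Omega_{(P_{k,S},\omega)}(n)$; this I would do by checking that both sides satisfy the same recurrence in $k$ and $n$ (the one inherited from the $\JS$ recurrence), or by a direct bijection between the $P$-partitions and the combinatorial model for $f_k(n;z)$ obtained in the first step, tracking the power of $z$ as $|S|$.

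The main obstacle I anticipate is the correct construction of the labeled poset $(P_{k,S},\omega)$ — getting the cover relations and the labeling $\omega$ so that linear extensions are exactly $\JSP_{k,S}$ \emph{and} descents are preserved. The Jacobi-Stirling condition is a ``$1$-$2$-$1$-avoidance'' type constraint (a nonconsecutive Stirling condition), so the poset will not simply be a disjoint union of chains; it should be a forest in which the two unbarred copies of $j$ and the barred copy $\bar j$ are arranged so that everything strictly between two equal letters must be larger, which forces a nesting structure. Choosing $\omega$ to be a natural or a strict labeling on the right vertices to convert ``$\ge$'' into ``$>$'' at the right spots is the delicate point, and I would calibrate it against the explicit list of $\JSP_{2,1}$ given in the excerpt (checking that those $20$ permutations, sorted by number of descents, reproduce $a_{2,1,j}$ from Theorem~\ref{theorem1}). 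Once the poset is pinned down, the rest is Stanley's reciprocity/$h$-vector machinery plus the bookkeeping that $p_{k,i}(n)$ — the coefficient of $z^i$, equivalently the number of barred letters removed — equals $\sum_{|S|=i}\Omega_{(P_{k,S},\omega)}(n)$, which should follow from the first-step formula for $f_k(n;z)$ together with the standard fact that the number of $\omega$-compatible $P$-partitions bounded by $n$ is an order polynomial.
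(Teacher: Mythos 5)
Your overall strategy---extract the coefficient of $z^i$ from an explicit formula for $f_k(n;z)$, realize the summands as order polynomials of labeled posets, and read off the numerator of \eqref{eqa0} as a descent generating function over linear extensions via \eqref{ppa}---is exactly the route the paper takes to its Theorem~\ref{th:main}, and that part of your plan is sound: from \eqref{e-J2} one gets $p_{k,i}(n)=\sum_{|S|=i}p_{k,S}(n)$ with $p_{k,S}(n)$ visibly the order polynomial of a ``chain of triangles'' poset $R_{k,S}$ (each factor $j_m^2$ giving two incomparable minimal points under a common cover, each factor $j_m$ giving one). But your central step---that $\JSP_{k,S}$ \emph{is} the set of linear extensions of some labeled poset, because the Jacobi--Stirling condition ``encodes which words arise as linear extensions of a forest-like poset''---is false, and this is where the proof breaks. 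Already for $k=1$, $S=\varnothing$: the Jacobi--Stirling permutations of $\{1,1,\bar 1\}$ are $\bar 1 11$ and $11\bar 1$ (the word $1\bar 1 1$ is excluded because $\bar 1<1$ sits between the two $1$'s). If this two-element set were $\L(P)$ for a poset $P$ on three elements $\{1_a,1_b,\bar 1\}$, then $P$ would have to contain the intersection of those two total orders, which is the single relation $1_a<1_b$ with $\bar 1$ incomparable to both---and that poset has three linear extensions, not two. The obstruction is structural: the Stirling-type constraint is a \emph{betweenness} (ternary) condition, not a conjunction of pairwise precedences, so no choice of cover relations or labeling $\omega$ can make $\L(P_{k,S},\omega)$ literally equal to $\JSP_{k,S}$. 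Consequently Stanley's formula \eqref{ppa} cannot be applied to $\JSP_{k,S}$ directly.

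What is actually needed---and what constitutes the real content of the paper's first proof---is a separate, nontrivial, descent-preserving bijection $\phi:\L(R_{k,S})\to\JSP_{k,\bar S}$. The paper builds $\phi$ recursively: delete the top ``triangle'' $\{3k-2,3k-1,3k\}$ from a linear extension, apply induction, then reinsert $\bar k$ and the pair $kk$ into descent or non-descent slots of the image word according to whether $3k-2$ and $3k-1$ occupied descent or non-descent slots of the original; the count of available slots of each type matches on both sides, which is what makes the bijection work and preserve $\des$. Your proposal contains no substitute for this step (nor for the paper's alternative second proof, which verifies that the descent counts of $\JSP_{k,i}$ satisfy the recurrence \eqref{th:7} by an insertion argument on slots). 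Calibrating against the list of $\JSP_{2,1}$ would reveal the failure but not repair it; to complete your argument you must either construct such a slot-matching bijection or prove the recurrence for the permutation counts directly.
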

The rest of this paper is organized as follows.
 In Section~2, we investigate some elementary properties of the Jacobi-Stirling polynomials
 and prove Theorem~1. In Section~3
 we  apply Stanley's  $P$-partition theory 
to derive a first interpretation of the integers $a_{k,i,j}$ and then 
reformulate  it  in terms of
  descents of Jacobi-Stirling permutations in Section~4.  In Section~5,  
 we construct Legendre-Stirling posets in order to prove  a  similar  result for the Legendre-Stirling numbers, and then 
  to deduce 
 Egge's result  for Legendre-Stirling numbers~\cite{eg}   in terms of
  descents of Legendre-Stirling permutations. A second proof of 
  Egge's result  is given by making a link to our result  for Jacobi-Stirling permutations, namely Theorem~2.  We end this paper with a conjecture on the real-rootedness of the polynomials 
  $A_{k,i}(t)$.

\section{Jacobi-Stirling polynomials}
\begin{proposition} \label{prop:1}
For $0\leq i\leq k$, 
the  Jacobi-Stirling coefficient $p_{k,i}(n)$ is a polynomial in $n$ of degree $3k-i$. Moreover,  the  leading coefficient 
of $p_{k,i}(n)$ is 
\begin{equation}\label{la}
\frac{1}{3^{k-i}2^i \, i!\,(k-i)!}
\end{equation}
for all $0\leq i \leq k$.
\end{proposition}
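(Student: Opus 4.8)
The plan is to extract the needed information directly from the defining recurrence for the Jacobi-Stirling numbers of the second kind. From \eqref{eq:JS} one gets the recurrence
\[
\JS(n+1,k;z)=\JS(n,k-1;z)+k(z+k)\,\JS(n,k;z),
\]
valid for $n\geq 0$, with the usual initial conditions. Substituting $n\mapsto n+k$ and $k\mapsto n$ gives a recurrence for $f_k(n;z)=\JS(n+k,n;z)$, namely
\[
f_k(n+1;z)=f_{k-1}(n+1;z)+n(z+n)\,f_k(n;z),
\]
and reading off the coefficient of $z^i$ yields
\[
p_{k,i}(n+1)=p_{k-1,i}(n+1)+n^2\,p_{k,i}(n)+n\,p_{k,i-1}(n).
\]
First I would verify the base cases: $f_0(n;z)=1$ so $p_{0,0}(n)=1$ (degree $0$, leading coefficient $1$, matching \eqref{la} with $k=i=0$), and by convention $p_{k,i}\equiv 0$ when $i<0$ or $i>k$.

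Next I would prove the degree statement by induction on $k$ (and within each $k$, the argument is uniform in $i$). Assume $p_{k-1,i}(n)$ has degree $3(k-1)-i=3k-i-3$ and $p_{k,i-1}(n)$ has degree $3k-i+1$. Then in the recurrence the term $n^2 p_{k,i}(n)$ would, if $p_{k,i}$ has degree $d$, contribute degree $d+2$; since the left side is $p_{k,i}(n+1)$ of degree $d$, a standard finite-difference argument shows that $\deg\big(p_{k,i}(n+1)-n^2p_{k,i}(n)\big)=d+1$ forces $d+1$ to equal the maximum of the degrees of the other two terms, i.e.\ $d+1=\max\{3k-i-3,\ 1+(3k-i+1)\}=3k-i+2$, giving $d=3k-i$. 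One has to be slightly careful that the leading terms do not cancel; this is where the explicit leading coefficient computation below does double duty.

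For the leading coefficient, write $p_{k,i}(n)=c_{k,i}n^{3k-i}+(\text{lower order})$. Comparing the coefficients of $n^{3k-i}$ on both sides of the recurrence: on the left $p_{k,i}(n+1)$ contributes $c_{k,i}n^{3k-i}$; the term $p_{k-1,i}(n+1)$ has strictly smaller degree and contributes nothing; the term $n^2 p_{k,i}(n)$ contributes $c_{k,i}n^{3k-i+2}$, which is too high and must be matched — so in fact the right comparison is at degree $3k-i+2$, and it shows the coefficient of $n^{3k-i+2}$ in $p_{k,i}(n+1)-n^2p_{k,i}(n)$ vanishes, consistent with the degree drop. The genuinely informative comparison is at degree $3k-i$: write $p_{k,i}(n+1)-n^2 p_{k,i}(n)$ and expand $p_{k,i}(n+1)=\sum_j c_{k,i}^{(j)}(n+1)^j$; a short computation gives the coefficient of $n^{3k-i}$ in this difference, and it must equal the coefficient of $n^{3k-i}$ coming from $n\,p_{k,i-1}(n)$ plus that from $p_{k-1,i}(n+1)$ — but those two also have controlled leading behavior. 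Rather than push symbols around, the cleanest route is to guess the formula \eqref{la}, call it $L(k,i)=\dfrac{1}{3^{k-i}2^i\,i!\,(k-i)!}$, and verify that $L$ satisfies the recurrence the leading coefficients must satisfy. Matching the top-degree terms in $p_{k,i}(n+1)=p_{k-1,i}(n+1)+n^2p_{k,i}(n)+n\,p_{k,i-1}(n)$: the term $n^2p_{k,i}(n)$ raises the degree by $2$ while the left side and the $n\,p_{k,i-1}(n)$ term have the same degree $3k-i$; matching degree $3k-i+2$, $3k-i+1$ in turn forces relations that collapse, and the net effect at the level of leading coefficients is the relation
\[
(3k-i)\,c_{k,i}=3\,c_{k,i}\quad\text{(from the degree-}(3k-i-1)\text{ comparison after one difference)} \ \Longrightarrow\ \text{recursion } c_{k,i}=\tfrac13 c_{k-1,i}+\tfrac12 c_{k-1,i-1},
\]
which is exactly the recursion satisfied by $L(k,i)$, since $\tfrac13 L(k-1,i)+\tfrac12 L(k-1,i-1)=\tfrac{1}{3^{k-i}2^i i!(k-i)!}\big(\tfrac{k-i}{k}+\tfrac{i}{k}\big)=L(k,i)$. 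Together with the base case $c_{0,0}=1=L(0,0)$, induction finishes it.

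The main obstacle is bookkeeping: one must carefully track how the operator $p(n)\mapsto p(n+1)-n^2p(n)$ acts on leading terms — it is not degree-preserving, it drops the degree by $1$ — and make sure the leading coefficients of the three competing terms on the right-hand side combine in exactly the way the guessed formula predicts, with no unexpected cancellation at the top degree. Once the correct ``leading-coefficient recursion'' $c_{k,i}=\tfrac13 c_{k-1,i}+\tfrac12 c_{k-1,i-1}$ is isolated, checking that \eqref{la} satisfies it is immediate, and the degree claim follows in the same induction since the formula being nonzero guarantees no degeneracy.
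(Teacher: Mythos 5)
There is a genuine error at the very first step: the diagonal recurrence is derived with the roles of $f_k$ and $f_{k-1}$ swapped. Applying $\JS(m,j;z)=\JS(m-1,j-1;z)+j(z+j)\JS(m-1,j;z)$ with $m=n+k$, $j=n$ gives
$\JS(n+k,n;z)=\JS(n+k-1,n-1;z)+n(z+n)\JS(n+k-1,n;z)$, i.e.
$f_k(n;z)-f_k(n-1;z)=n(n+z)\,f_{k-1}(n;z)$: the factor $n(n+z)$ multiplies $f_{k-1}$, not $f_k$. Your version $f_k(n+1;z)=f_{k-1}(n+1;z)+n(z+n)f_k(n;z)$ is false (already for $k=1$ the two sides have different degrees in $n$), and everything downstream inherits the problem. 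In particular, the operator $p(n)\mapsto p(n+1)-n^2p(n)$ that you try to analyze \emph{raises} the degree by $2$ rather than dropping it by $1$, so the ``finite-difference'' bookkeeping as described cannot be carried out. With the correct recurrence the situation is much simpler: $p_{k,i}(n)-p_{k,i}(n-1)=n^2p_{k-1,i}(n)+n\,p_{k-1,i-1}(n)$ expresses an honest first difference of $p_{k,i}$ purely in terms of level-$(k-1)$ data, so the induction on $k$ gives the degree bound immediately, with no implicit equation to untangle and no circular appeal to $p_{k,i-1}$ at the same level $k$.

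The leading-coefficient recursion you isolate is also wrong, and your verification of it contains an arithmetic slip. With $L(k,i)=1/(3^{k-i}2^i\,i!\,(k-i)!)$ one has $\tfrac13L(k-1,i)=(k-i)L(k,i)$ and $\tfrac12L(k-1,i-1)=i\,L(k,i)$, so $\tfrac13L(k-1,i)+\tfrac12L(k-1,i-1)=k\,L(k,i)\neq L(k,i)$ (your computation introduces an unexplained division by $k$). The correct relation, obtained by comparing coefficients of $n^{3k-i-1}$ in the true difference equation, is $(3k-i)\beta_{k,i}=\beta_{k-1,i}+\beta_{k-1,i-1}$, and this one is indeed satisfied by $L$, since $L(k-1,i)+L(k-1,i-1)=\bigl(3(k-i)+2i\bigr)L(k,i)=(3k-i)L(k,i)$. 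So the overall strategy (induction on $k$ via the diagonal recurrence, then matching top coefficients) is the right one and is essentially what the paper does, but as written the proof does not go through: you need to fix the recurrence and rederive the leading-coefficient relation.
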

\begin{proof}
We proceed  by induction on $k\geq 0$. 
For $k=0$, we have $p_{0,0}(n)=1$ since $f_0(n)=\JS(n,n;z)=1$.
Let $k\geq1$ and 
suppose  that   $p_{k-1,i}$ is a polynomial in $n$ of degree $3(k-1)-i$ for $0\leq i\leq k-1$. 
From  \eqref{eq:JS}  we deduce the recurrence relation:
\begin{align}
\left\{ \begin{array}{l} \JS(0,0;z)=1, \qquad \JS(n,k;z)=0,   \text{ if } k \not\in\{1,\ldots,n\}, \\
\label{eqlnk1} \JS(n,k;z)= \JS(n-1,k-1;z)+k(k+z)\,\JS(n-1,k;z),  \text{ for }   n,k \geq 1. \end{array} \right.
\end{align}
Substituting in  \eqref{dp}  yields 
\begin{equation}\label{eqf}
f_k(n;z)-f_k(n-1;z)=n(n+z)f_{k-1}(n;z).
\end{equation}
It follows from \eqref{e-f1} that  for $0\leq i\leq k$, 
\begin{equation}\label{eq:p}
p_{k,i}(n)-p_{k,i}(n-1)=n^2p_{k-1,i}(n)+np_{k-1,i-1}(n).
\end{equation}
Applying  the induction hypothesis, we see that $p_{k,i}(n)-p_{k,i}(n-1)$ is a polynomial in $n$ of degree at most
$$
\max(3(k-1)-i+2,3(k-1)-(i-1)+1)=3k-i-1.
$$  
Hence $p_{k,i}(n)$ is a polynomial in $n$ of degree at most $3k-i$. It remains to determine the  coefficient of $n^{3k-i}$, say $\beta_{k,i}$.
Extracting the coefficient of $n^{3k-i-1}$ in  \eqref{eq:p} we have
$$
\beta_{k,i}=\frac{1}{3k-i}(\beta_{k-1,i}+\beta_{k-1,i-1}).
$$
Now it is fairly  easy to see that \eqref{la} satisfies the above recurrence.
\end{proof}
 
\begin{proposition} \label{th:p}
For all $k\geq 1$ and $0\leq i\leq k$, we have
\begin{align}\label{eq:root}
 p_{k,i}(0)=p_{k,i}(-1)=p_{k,i}(-2)=\cdots=p_{k,i}(-k)=0.
 \end{align}
\end{proposition}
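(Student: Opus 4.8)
The plan is to prove the slightly stronger statement that, for every $k\ge 1$, the polynomial $f_k(-j;z)$ in $z$ is identically zero for each $j$ with $0\le j\le k$; extracting the coefficient of $z^i$ via \eqref{e-f1} then gives \eqref{eq:root} for all $0\le i\le k$ at once. Two ingredients are needed. First, the ``initial value'' $f_k(0;z)=\JS(k,0;z)$ vanishes for $k\ge1$: set $x=0$ in \eqref{eq:JS}, or read it off from the boundary values in \eqref{eqlnk1}. Second, the difference equation \eqref{eqf} can be evaluated at negative integers: by Proposition~\ref{prop:1} both sides of \eqref{eqf} are polynomials in $n$, and they agree for all integers $n\ge1$, so \eqref{eqf} is an identity of polynomials in $n$ and holds for every $n\in\Z$.

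I would then argue by induction on $k\ge1$. For the base case $k=1$ we have $f_1(0;z)=\JS(1,0;z)=0$, and putting $n=0$ in \eqref{eqf} gives $f_1(0;z)-f_1(-1;z)=0$, so $f_1(-1;z)=0$ as well. For the inductive step, fix $k\ge2$ and assume $f_{k-1}(-j;z)=0$ for all $0\le j\le k-1$. Again $f_k(0;z)=\JS(k,0;z)=0$. Now for each $j$ with $1\le j\le k$, setting $n=1-j$ in \eqref{eqf} yields
\[
f_k(1-j;z)-f_k(-j;z)=(1-j)\bigl(1-j+z\bigr)\,f_{k-1}(1-j;z).
\]
Since $0\le j-1\le k-1$, the factor $f_{k-1}(1-j;z)$ vanishes by the induction hypothesis, so $f_k(-j;z)=f_k(1-j;z)$. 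Starting from $f_k(0;z)=0$ and applying this equality successively for $j=1,2,\dots,k$ shows $f_k(-j;z)=0$ for all $0\le j\le k$, which closes the induction. Taking the coefficient of $z^i$ on both sides produces $p_{k,i}(-j)=0$ for $0\le i\le k$ and $0\le j\le k$, which is precisely \eqref{eq:root}.

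There is no real obstacle here; the argument is an elementary telescoping. The only points that need a little care are (i) packaging the individual coefficient polynomials $p_{k,i}(n)$ into the single polynomial $f_k(n;z)$, so that one telescoping handles all $i$ simultaneously; (ii) justifying that the recurrence \eqref{eqf}, a priori valid only for $n\ge1$, extends to negative integer arguments because it is a polynomial identity; and (iii) keeping the nested inductions straight, in particular observing that as $j$ runs over $\{1,\dots,k\}$ the argument $1-j$ runs over $\{0,-1,\dots,-(k-1)\}$, exactly the range controlled by the induction hypothesis at level $k-1$.
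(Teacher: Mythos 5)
Your proof is correct and follows essentially the same route as the paper's: induction on $k$, using the vanishing of $\JS(k,0;z)$ at $n=0$ together with the difference equation \eqref{eqf} (equivalently, its coefficient-wise form \eqref{eq:p}) to telescope down to $n=-k$. The only differences are cosmetic: you work with $f_k(n;z)$ instead of the individual coefficients $p_{k,i}(n)$, you make explicit the (implicitly used) point that the recurrence extends to negative integers as a polynomial identity, and you handle the base case $k=1$ via the recurrence rather than via the closed forms for $p_{1,0}$ and $p_{1,1}$.
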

\begin{proof}
We proceed by induction on $k$.
By definition, we have  
$$
f_1(n;z)=\JS(n+1, n; z)=p_{1,0}(n)+p_{1,1}(n)z.
$$ 
As  noticed in \cite[Theorem~1]{gz}, the leading coefficient of the polynomial
$\JS(n,k;z)$   is $S(n,k)$ and
the constant term is $T(2n,2k)$.
We derive from \eqref{eq:stirling} and \eqref{eq:cf2} that 
\begin{align*}
  p_{1,1}(n)&=S(n+1,n)=n(n+1)/2,\\
   p_{1,0}(n)&=T(2n+2,2n)=n(n+1)(2n+1)/{6}.
\end{align*}
Hence \eqref{eq:root} is true for $k=1$.
Assume that \eqref{eq:root} is true for 
some  $k\geq 1$.  By \eqref{eq:p} we have 
$$
p_{k,i}(n)-p_{k,i}(n-1)=n^2p_{k-1,i}(n)+np_{k-1,i-1}(n).
$$
Since $\JS(0,k;z)=0$ if $k\geq 2$, we have $p_{k,i}(0)=0$.
The above equation and the induction hypothesis imply  successively that 
$$
p_{k,i}(-1)=0,\quad p_{k,i}(-2)=0,\;\ldots,\; p_{k,i}(-k+1)=0,\quad p_{k,i}(-k)=0.
$$
The proof is thus complete.
\end{proof}

\begin{lemma}\label{lem5}
For each integer  $k$ and $i$ such that $0\leq i\leq k$, there is a polynomial 
$A_{k,i}(t)=\sum_{j=1}^{2k-i} a_{k,i,j}t^j$ with integer  coefficients 
such that 
\begin{equation}\label{eqa}
\sum_{n\geq0}p_{k,i}(n)t^n=\frac{A_{k,i}(t)}{(1-t)^{3k-i+1}}.
\end{equation}
\end{lemma}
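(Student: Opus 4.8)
The plan is to turn the two structural facts just established---Proposition~\ref{prop:1}, which says $p_{k,i}(n)$ has degree $3k-i$ in $n$, and Proposition~\ref{th:p}, which exhibits the $k+1$ roots $0,-1,\dots,-k$ of $p_{k,i}(n)$---into statements about the numerator of the generating function, using the standard power-series machinery for polynomial sequences. First I would record that $p_{k,i}(n)\in\mathbb Z_{\ge 0}$ for every $n\ge 0$: by the recurrence \eqref{eqlnk1} each $\JS(n,k;z)$ is a polynomial in $z$ with nonnegative integer coefficients, hence so is $f_k(n;z)=\JS(n+k,n;z)$, and $p_{k,i}(n)$ is one of its coefficients. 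Since $p_{k,i}$ is a polynomial of degree $3k-i$, expanding it in the binomial basis $\binom nr$, $0\le r\le 3k-i$, and using $\sum_{n\ge 0}\binom nr t^n=t^r/(1-t)^{r+1}$ yields at once
\[
\sum_{n\ge 0}p_{k,i}(n)t^n=\frac{A_{k,i}(t)}{(1-t)^{3k-i+1}},\qquad A_{k,i}(t)=\sum_{j\ge 0}a_{k,i,j}t^j ,
\]
with $\deg A_{k,i}\le 3k-i$ and $a_{k,i,j}=\sum_{l=0}^{j}(-1)^{j-l}\binom{3k-i+1}{j-l}p_{k,i}(l)\in\mathbb Z$. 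This already gives \eqref{eqa}; what remains is to show $A_{k,i}$ has no constant term and degree at most $2k-i$, and this is precisely what the $k+1$ roots of Proposition~\ref{th:p} buy.

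The absence of a constant term is the easy half: $a_{k,i,0}=[t^0]\bigl((1-t)^{3k-i+1}\sum_{n\ge 0}p_{k,i}(n)t^n\bigr)=p_{k,i}(0)=0$. For the degree bound I would use the remaining roots $-1,\dots,-k$ via reciprocity: writing $d=3k-i$ and $A_{k,i}^{\ast}(t)=\sum_{j}a_{k,i,j}t^{d-j}$ for the reversal of $A_{k,i}$ formed with respect to the a priori bound $d$, one has the identity of rational functions
\[
\sum_{n\ge 1}p_{k,i}(-n)\,t^n=\frac{(-1)^{d}\,t\,A_{k,i}^{\ast}(t)}{(1-t)^{d+1}} ,
\]
which is the standard reciprocity for generating functions of polynomial functions (equivalently, a short computation substituting $1/t$ for $t$ in the displayed identity and clearing signs). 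By Proposition~\ref{th:p} the coefficients of $t^1,\dots,t^k$ on the left vanish, so the left-hand side is divisible by $t^{k+1}$ in $\mathbb Z[[t]]$; since $(1-t)^{-(d+1)}$ is a unit there, $t\,A_{k,i}^{\ast}(t)$---hence $A_{k,i}^{\ast}(t)$---must be divisible by $t^{k}$. This says $a_{k,i,j}=0$ whenever $d-j<k$, i.e.\ whenever $j>2k-i$. Combined with $a_{k,i,0}=0$ this gives $A_{k,i}(t)=\sum_{j=1}^{2k-i}a_{k,i,j}t^j$ with integer coefficients, which is the assertion.

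The one spot where I expect to need care is the bookkeeping in the reciprocity step: getting the sign $(-1)^{d}$ and the factor $t$ right, and---crucially---forming the reversal $A_{k,i}^{\ast}$ relative to the degree bound $d=3k-i$ rather than the true (smaller) degree of $A_{k,i}$, since otherwise the translation between ``divisible by $t^{k+1}$'' and ``low-degree numerator'' comes out wrong. Everything else is routine. (One could also bypass reciprocity: Proposition~\ref{th:p} shows $n(n+1)\cdots(n+k)\mid p_{k,i}(n)$ as polynomials, so $p_{k,i}(n)=n(n+1)\cdots(n+k)\,q(n)$ with $\deg q=2k-i-1$, and one then pushes the generating function through this factorization; but keeping track of the integrality of the quotient is a bit more fiddly, so I would prefer the route above.)
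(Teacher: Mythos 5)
Your proof is correct and follows essentially the same route as the paper: establish the rational form with numerator of degree at most $3k-i$ and integer coefficients (the paper cites Stanley's Corollary 4.3.1 where you expand in the binomial basis), then use the vanishing of $p_{k,i}$ at $0,-1,\dots,-k$ together with the reciprocity formula for $\sum_{n\ge1}p_{k,i}(-n)t^n$ to kill the constant term and the top $k$ coefficients of the numerator. Your version is in fact slightly more careful than the paper's about the sign, the factor of $t$, and forming the reversal relative to the a priori degree bound $3k-i$.
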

\begin{proof}
By Proposition~\ref{prop:1} and standard results concerning rational generating functions (cf. \cite[Corollary 4.3.1]{st}), 
for each integer  $k$ and $i$ such that $0\leq i\leq k$, there is a polynomial 
$A_{k,i}(t)=a_{k,i,0}+a_{k,i,1}t+\cdots +a_{k,i,3k-i}t^{3k-i}$ satisfying \eqref{eqa}.
Now, by \cite[Proposition 4.2.3]{st}, we have 
\begin{align}\label{eq:inverse}
\sum_{n\geq 1}p_{k,i}(-n)t^n=-\frac{A_{k,i}(1/t)}{(1-1/t)^{2k-i+1}}.
\end{align}
Applying \eqref{eq:root} we see that $a_{k,i,2k-i+1}=\cdots =a_{k,i,3k-i}=0$.
\end{proof}

 \begin{table}
\caption{The first values of $A_{k,i}(t)$}
\label{Lnkz}
\begin{tiny} \[ \begin{tabular}{c|cccccc}
$k\backslash i$ & $0$ & $1$ & $2$ & $3$
\\
\hline
$0$ & $1$ \\
\\
$1$ & ${t+t^2}$ & ${t}$  \\
\\
$2$ & ${t+14t^2+21t^3+4t^4}$ &${2t+12t^2+6t^3}$&${t+2t^2}$  \\
\\
$3$ & ${t+75t^2+603t^3+1065t^4+460t^5+36t^6}$ &  ${3t+114t^2+501t^3+436t^4+66t^5}$ &${3t+55t^2+116t^3+36t^4}$ &${t+8t^2+6t^3}$               \\
\end{tabular} \] \end{tiny} \end{table}

The first values of $A_{k,i}(t)$ are given in Table~1.
The following result gives a recurrence  for the coefficients $a_{k,i,j}$.
\begin{proposition}\label{prop6}
Let $a_{0,0,0}=1$.
For $k,i,j\geq 0$, we have 
the following recurrence for the integers $a_{k,i,j}$:
\bea\label{th:7}
\begin{gathered}
a_{k,i,j}=j^2a_{k-1,i,j}+[2(j-1)(3k-i-j-1)+(3k-i-2)]a_{k-1,i,j-1}\\
+(3k-i-j)^2a_{k-1,i,j-2}+ja_{k-1,i-1,j}+(3k-i-j)a_{k-1,i-1,j-1},
\end{gathered}
\eea
where $a_{k,i,j}=0$ if any of the indices $k,i,j$ is negative or if $j\notin \{1, \dots, 2k-i\}$.
\end{proposition}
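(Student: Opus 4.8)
\emph{Proof proposal.} The plan is to convert the recurrence \eqref{eq:p} for the Jacobi--Stirling coefficients into an identity among the generating functions $F_{k,i}(t):=\sum_{n\ge0}p_{k,i}(n)t^n$, and then push this identity down to the numerators $A_{k,i}(t)$ of Lemma~\ref{lem5} and read off the coefficient of $t^j$. Write $\theta=t\,\frac{d}{dt}$, so that $\theta F_{k-1,i}=\sum_{n\ge0}np_{k-1,i}(n)t^n$ and $\theta^2F_{k-1,i}=\sum_{n\ge0}n^2p_{k-1,i}(n)t^n$. Multiplying \eqref{eq:p} by $t^n$ and summing over $n\ge0$, and using $p_{k,i}(-1)=0$ (Proposition~\ref{th:p}, valid for $k\ge1$) to get $\sum_{n\ge0}p_{k,i}(n-1)t^n=tF_{k,i}(t)$, I obtain
\begin{equation*}
(1-t)F_{k,i}(t)=\theta^2F_{k-1,i}(t)+\theta F_{k-1,i-1}(t).
\end{equation*}

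The next step is a transfer lemma proved by a one-line differentiation: for any polynomial $P$ and any integer $e$,
\begin{equation*}
(1-t)^{e+1}\,\theta\!\left[\frac{P(t)}{(1-t)^{e}}\right]=t\bigl[(1-t)P'(t)+eP(t)\bigr].
\end{equation*}
Using Lemma~\ref{lem5} to substitute $F_{k,i}(t)=A_{k,i}(t)/(1-t)^{3k-i+1}$, $F_{k-1,i}(t)=A_{k-1,i}(t)/(1-t)^{3k-i-2}$ and $F_{k-1,i-1}(t)=A_{k-1,i-1}(t)/(1-t)^{3k-i-1}$, multiplying the displayed identity by $(1-t)^{3k-i}$, and applying the transfer lemma once (with $e=3k-i-1$) to the term $\theta F_{k-1,i-1}$ and twice (with $e=3k-i-2$ then $e=3k-i-1$) to the term $\theta^2F_{k-1,i}$, I obtain
\begin{equation*}
A_{k,i}(t)=t\bigl[(1-t)Q'(t)+(3k-i-1)Q(t)\bigr]+t\bigl[(1-t)A_{k-1,i-1}'(t)+(3k-i-1)A_{k-1,i-1}(t)\bigr],
\end{equation*}
where $Q(t):=t\bigl[(1-t)A_{k-1,i}'(t)+(3k-i-2)A_{k-1,i}(t)\bigr]$.

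Finally I extract coefficients. In general, if $B(t)=\sum_m b_mt^m$, then the coefficient of $t^j$ in $t\bigl[(1-t)B'(t)+eB(t)\bigr]$ is $jb_j+(e+1-j)b_{j-1}$. Applying this to $A_{k-1,i}$ with $e=3k-i-2$ gives the coefficient $q_m=m\,a_{k-1,i,m}+(3k-i-1-m)\,a_{k-1,i,m-1}$ of $Q(t)$; applying it again with $e=3k-i-1$, and also to $A_{k-1,i-1}$ with $e=3k-i-1$, yields
\begin{equation*}
a_{k,i,j}=jq_j+(3k-i-j)q_{j-1}+j\,a_{k-1,i-1,j}+(3k-i-j)\,a_{k-1,i-1,j-1}.
\end{equation*}
Substituting the values of $q_j$ and $q_{j-1}$ and simplifying the coefficient of $a_{k-1,i,j-1}$, namely $j(3k-i-1-j)+(3k-i-j)(j-1)=2(j-1)(3k-i-j-1)+(3k-i-2)$, produces exactly \eqref{th:7}. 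The initial value $a_{0,0,0}=1$ comes from $A_{0,0}(t)=1$, and the vanishing of $a_{k,i,j}$ for negative indices or for $j\notin\{1,\dots,2k-i\}$ is recorded in Lemma~\ref{lem5} together with Propositions~\ref{prop:1} and \ref{th:p}. There is no real obstacle here; the only care needed is to keep the three exponents $3k-i$, $3k-i-1$, $3k-i-2$ straight through the two applications of the transfer lemma and to carry out the final algebraic simplification of the middle coefficient. As a sanity check, the recurrence reproduces the entries of Table~\ref{Lnkz}.
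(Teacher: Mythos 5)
Your proof is correct and follows essentially the same route as the paper: both translate the recurrence \eqref{eq:p} into the generating-function identity $(1-t)F_{k,i}=\theta^2F_{k-1,i}+\theta F_{k-1,i-1}$ (the paper writes $\theta^2=t^2\frac{d^2}{dt^2}+t\frac{d}{dt}$ explicitly), substitute $F=A/(1-t)^{e+1}$, and extract the coefficient of $t^j$. Your use of the operator $\theta$ and the transfer lemma merely repackages the paper's direct expansion, and your verification of the middle coefficient $j(3k-i-1-j)+(3k-i-j)(j-1)=2(j-1)(3k-i-j-1)+(3k-i-2)$ checks out.
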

\begin{proof}
For $0\leq i \leq k$, let
\begin{align}\label{fk}
F_{k,i}(t)=\sum_{n\geq0}p_{k,i}(n)t^n=\frac{A_{k,i}(t)}{(1-t)^{3k-i+1}}.
\end{align}
The recurrence relation \eqref{eq:p}  is equivalent to
\begin{align}\label{th:6}
F_{k,i}(t)=(1-t)^{-1}[t^2F''_{k-1,i}(t)+tF'_{k-1,i}(t)+tF'_{k-1,i-1}(t)]
\end{align}
with $F_{0,0}=(1-t)^{-1}$.  
Substituting \eqref{fk} into \eqref{th:6} we obtain
\begin{align*}
A_{k,i}(t)=&(1-t)^{3k-i}[t^2(A_{k-1,i}(t)(1-t)^{-(3k-i-2)})''\\
&+t(A_{k-1,i}(t)(1-t)^{-(3k-i-2)})'+t(A_{k-1,i-1}(t)(1-t)^{-(3k-i-1)})']\\
=&[t^2A''_{k-1,i}(t)(1-t)^2+2(3k-i-2)t^2A'_{k-1,i}(t)(1-t)\\
&+(3k-i-2)(3k-i-1)t^2A_{k-1,i}(t)]\\
&+[tA'_{k-1,i}(t)(1-t)^2+(3k-i-2)tA_{k-1,i}(t)(1-t)]\\
&+[tA'_{k-1,i-1}(t)(1-t)+(3k-i-1)tA_{k-1,i-1}(t)].
\end{align*}
Taking the coefficient of $t^j$ in both sides of the above equation, we have 
\begin{align*}
a_{k,i,j}=&j(j-1)a_{k-1,i,j}-2(j-1)(j-2)a_{k-1,i,j-1}+(j-2)(j-3)a_{k-1,i,j-2}\\
&+2(3k-i-2)(j-1)a_{k-1,i,j-1}-2(3k-i-2)(j-2)a_{k-1,i,j-2}\\
&+(3k-i-2)(3k-i-1)a_{k-1,i,j-2}+ja_{k-1,i,j}-2(j-1)a_{k-1,i,j-1}\\
&+(j-2)a_{k-1,i,j-2}+(3k-i-2)a_{k-1,i,j-1}-(3k-i-2)a_{k-1,i,j-2}\\
&+ja_{k-1,i-1,j}-(j-1)a_{k-1,i-1,j-1}+(3k-i-1)a_{k-1,i-1,j-1},
\end{align*}
which gives \eqref{th:7} after simplification.
\end{proof}

\begin{corollary}\label{cor7} For $k\geq 0$ and $0\leq i\leq k$,
 the coefficients $a_{k,i,j}$ are positive integers for $1\leq j\leq 2k-i$.
\end{corollary}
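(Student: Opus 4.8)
The plan is to prove Corollary~\ref{cor7} by induction on $k$, using the recurrence \eqref{th:7} of Proposition~\ref{prop6}. The base cases $k=0$ (for which the range $1\le j\le 2k-i$ is empty, so there is nothing to prove) and $k=1$ (for which Table~\ref{Lnkz} gives $a_{1,0,1}=a_{1,0,2}=a_{1,1,1}=1$) are immediate. So fix $k\ge 2$, assume the statement for $k-1$, and recall from Lemma~\ref{lem5} that each $a_{k-1,i',j'}$ is a nonnegative integer which vanishes unless $1\le j'\le 2(k-1)-i'$.

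First I would verify that, for $0\le i\le k$ and $1\le j\le 2k-i$, all five coefficient polynomials in $j$ that appear on the right of \eqref{th:7} are nonnegative: $j^2$, $j$ and $(3k-i-j)^2$ obviously are, while $3k-i-j\ge 3k-i-(2k-i)=k>0$, and in the bracketed coefficient $j-1\ge 0$, $\;3k-i-j-1\ge k-1>0$ and $3k-i-2\ge 2k-2>0$. Combined with the induction hypothesis this already shows that $a_{k,i,j}$ is a nonnegative integer, integrality being obvious since all data in the recurrence are integers. It then remains to exhibit, for each admissible pair $(i,j)$ with $0\le i\le k$ and $1\le j\le 2k-i$, one summand on the right of \eqref{th:7} that is strictly positive.

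To do this I would distinguish two cases. If $i=k$, then $j\in\{1,\dots,k\}$; for $1\le j\le k-1$ the summand $j\,a_{k-1,k-1,j}$ is positive since $j\le k-1=2(k-1)-(k-1)$, and for $j=k$ the summand $(3k-i-j)\,a_{k-1,i-1,j-1}=k\,a_{k-1,k-1,k-1}$ is positive. If $i\le k-1$, then $j\in\{1,\dots,2k-i\}$ and $2(k-1)-i\ge k-1\ge 1$; for $1\le j\le 2(k-1)-i$ the summand $j^2 a_{k-1,i,j}$ is positive, for $j=2k-1-i$ the bracketed coefficient (which is $\ge 2k-2>0$) multiplies $a_{k-1,i,j-1}=a_{k-1,i,2(k-1)-i}>0$, and for $j=2k-i$ the summand $(3k-i-j)^2 a_{k-1,i,j-2}=k^2\,a_{k-1,i,2(k-1)-i}$ is positive. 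Since these subcases exhaust the range of $j$ in each of the two cases, the induction goes through.

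The computations are routine; the only point requiring attention is that at the top end $j\in\{2k-1-i,\,2k-i\}$ the shifted index $j-1$ or $j-2$ must still land in $\{1,\dots,2(k-1)-i\}$, where the induction hypothesis supplies positivity. This is precisely why I would start the induction at $k=1$ rather than $k=0$: it guarantees $2(k-1)-i\ge 1$ throughout the step and removes the need to appeal separately to the exceptional value $a_{0,0,0}=1$.
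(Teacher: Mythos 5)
Your proof is correct and follows essentially the same strategy as the paper's: induction on $k$ via the recurrence \eqref{th:7}, observing that every summand on the right-hand side is nonnegative and then exhibiting at least one strictly positive summand. The only difference lies in the choice of witness terms: the paper always points to the summands involving $a_{k-1,i-1,\cdot}$, which vanish when $i=0$, whereas your case split (using the $a_{k-1,i,\cdot}$ terms for $i\le k-1$ and the $a_{k-1,i-1,\cdot}$ terms only for $i=k$) handles that boundary case correctly, so your version is in fact slightly more careful than the published argument.
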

\begin{proof}
This  follows  from \eqref{th:7}  by induction on $k$. Clearly, this is true   for $k=0$ and $k=1$. 
Suppose that this is true for some $k\geq 1$.  As each term in the right-hand side of \eqref{th:7} is nonnegative, 
we only need to show that at least one term on the right-hand side of \eqref{th:7}  is strictly positive.
Indeed,  for $k\geq 2$, 
 the induction hypothesis  and \eqref{th:7}  imply that
\begin{itemize}
\item if $j=1$, then $a_{k,i,1}\geq a_{k-1,i-1,1}>0$; 
\item  if $2\leq j\leq 2k-i$, then
$a_{k,i,j}\geq (3k-i-j)a_{k-1,i-1,j-1}\geq ka_{k-1,i-1,j-1}>0$.
\end{itemize}
These two cases cover all possibilities.
\end{proof}

Theorem~\ref{thm1} follows then from Lemma~\ref{lem5}, Proposition~\ref{prop6} and Corollary~\ref{cor7}.\\

Now, define  the \emph{Jacobi-Stirling polynomial of the first kind} $g_k(n;z)$ by 
\begin{align}\label{eq:js1}
g_k(n;z)=\js(n,n-k;z).
\end{align} 

\begin{proposition}\label{re}
For $k\geq 1$, we have 
\begin{equation}\label{eq:relation}
g_k(n;z)=f_k(-n;-z).
\end{equation}
If we write 
$g_k(n;z)=q_{k,0}(n)+q_{k,1}(n)z+\dots+q_{k,k}(n)z^k$,
then 
\begin{align}\label{eq:first}
\sum_{n\geq 1}q_{k,i}(n)t^n=(-1)^k\frac{\sum_{j=1}^{2k-i}a_{k,i,3k-i+1-j}t^{j}}{(1-t)^{3k-i+1}}.
\end{align}
\end{proposition}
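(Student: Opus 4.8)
The plan is to prove the identity \eqref{eq:relation} first, then deduce the generating function formula \eqref{eq:first} purely formally from Lemma~\ref{lem5} and the reciprocity for rational generating functions. For \eqref{eq:relation}, I would start from the defining products. From \eqref{eq:js} the first-kind numbers satisfy $\prod_{i=0}^{n-1}(x-i(z+i))=\sum_k \js(n,k;z)x^k$, and from \eqref{eq:JS} we have $x^n=\sum_k\JS(n,k;z)\prod_{i=0}^{k-1}(x-i(z+i))$. The key observation is that the polynomial $\prod_{i=0}^{k-1}(x-i(z+i))$ is invariant (up to the obvious sign) under the substitution $(x,z)\mapsto(-x,-z)$ together with reindexing $i\mapsto $ a shift: indeed $-i(-z+i)=i(z-i)$, and replacing $i$ by $-i$ or by $k-1-i$ one checks $\prod_{i=0}^{k-1}(-x-i(-z+i))=\prod_{i=0}^{k-1}(-x+i(z-i))$. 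A cleaner route is to use the known recurrences: from \eqref{eqlnk1} one has $\JS(n,k;z)=\JS(n-1,k-1;z)+k(k+z)\JS(n-1,k;z)$, and the analogous recurrence for $\js$ derived from \eqref{eq:js} reads $\js(n,k;z)=\js(n-1,k-1;z)-(n-1)(n-1+z)\js(n-1,k;z)$. Writing $f_k(n;z)=\JS(n+k,n;z)$ and $g_k(n;z)=\js(n,n-k;z)$, I would translate both into the shifted recurrences \eqref{eqf}, namely $f_k(n;z)-f_k(n-1;z)=n(n+z)f_{k-1}(n;z)$, and its first-kind counterpart $g_k(n;z)-g_k(n-1;z)=-(n-1)(n-1+z)\,$-type relation; then verify by induction on $k$ that $h_k(n;z):=f_k(-n;-z)$ satisfies the same recurrence and the same initial condition ($h_0=g_0=1$) as $g_k(n;z)$. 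Matching the recurrences after the substitution $n\mapsto -n$, $z\mapsto -z$ is the one genuinely computational step, but it is routine bookkeeping with signs.

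Once \eqref{eq:relation} is established, the generating function identity is immediate. Since $p_{k,i}(n)$ is a polynomial in $n$ of degree $3k-i$ (Proposition~\ref{prop:1}) and \eqref{eq:relation} gives $q_{k,i}(n)=(-1)^i p_{k,i}(-n)$ (the factor $(-1)^i$ coming from the $z\mapsto -z$ substitution acting on the coefficient of $z^i$), I would apply the polynomial reciprocity for rational generating functions, \cite[Proposition 4.2.3]{st}, exactly as in the proof of Lemma~\ref{lem5}: from $\sum_{n\ge0}p_{k,i}(n)t^n=A_{k,i}(t)/(1-t)^{3k-i+1}$ one gets $\sum_{n\ge1}p_{k,i}(-n)t^n=-t^{3k-i+1}A_{k,i}(1/t)/(1-t)^{3k-i+1}$, and $t^{3k-i+1}A_{k,i}(1/t)=\sum_{j}a_{k,i,3k-i+1-j}t^{j}$ is the reversal of $A_{k,i}$. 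Combining the sign $(-1)^i$ with the sign $-1=(-1)^{3k-i+1}(-1)^{-(3k-i)}$ produced by reciprocity, and recalling from Lemma~\ref{lem5} that $a_{k,i,j}=0$ for $j>2k-i$ (so the reversed polynomial's relevant range is $1\le j\le 2k-i$), one arrives at the claimed $(-1)^k$ prefactor in \eqref{eq:first}; I would double-check the parity reduces to $(-1)^k$ as stated.

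The main obstacle is the sign reconciliation: proving \eqref{eq:relation} requires carefully tracking how $n\mapsto -n$ interacts with the shifted-index recurrence \eqref{eqf} (the product $\prod_{i=0}^{k-1}(x-i(z+i))$ and the "$n(n+z)$" factor both change shape), and then in \eqref{eq:first} one must confirm that the two sources of signs — the $(-1)^i$ from extracting the $z^i$-coefficient after $z\mapsto -z$, and the sign from rational reciprocity applied to a degree-$(3k-i)$ polynomial — collapse to a single overall $(-1)^k$. Everything else is a mechanical application of results already in the paper. I would also note as a sanity check that setting $z=1$ and comparing low-degree entries of Table~1 against known Legendre-Stirling data confirms the signs, and that the degree count $\deg q_{k,i}=3k-i$ matches the denominator exponent, as it must.
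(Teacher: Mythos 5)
Your plan coincides with the paper's own proof: the paper derives the recurrence $g_k(n;z)-g_k(n-1;z)=-(n-1)(n-1+z)g_{k-1}(n-1;z)$ from \eqref{eqlnk2} and \eqref{eq:js1}, compares it with \eqref{eqf} under $n\mapsto -n$, $z\mapsto -z$ (the initial values also match, since $g_k(0;z)=\js(0,-k;z)=0=\JS(k,0;-z)=f_k(0;-z)$ for $k\geq1$), concludes \eqref{eq:relation} and hence $q_{k,i}(n)=(-1)^i p_{k,i}(-n)$, and then invokes the reciprocity formula \eqref{eq:inverse}. The sign you left hanging does close up: reciprocity contributes $(-1)^{3k-i}$, because $(1-1/t)^{3k-i+1}=(-1)^{3k-i+1}t^{-(3k-i+1)}(1-t)^{3k-i+1}$ and this combines with the leading minus sign; together with the $(-1)^i$ from $z\mapsto -z$ one gets $(-1)^{3k}=(-1)^k$. (Note in passing that the exponent $2k-i+1$ printed in \eqref{eq:inverse} is itself a typo for $3k-i+1$, which is what you correctly use.)

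The one concrete misstep is your parenthetical claim that ``the reversed polynomial's relevant range is $1\le j\le 2k-i$.'' Since $A_{k,i}(t)=\sum_{j=1}^{2k-i}a_{k,i,j}t^j$, its reversal $t^{3k-i+1}A_{k,i}(1/t)=\sum_{j=1}^{2k-i}a_{k,i,j}t^{3k-i+1-j}$ is supported on exponents $k+1,\dots,3k-i$, not $1,\dots,2k-i$. This matters: for $k=1$, $i=0$ one has $A_{1,0}(t)=t+t^2$ and $\sum_{n\geq1}q_{1,0}(n)t^n=-(t^2+t^3)/(1-t)^4$ (check: $q_{1,0}(3)$ is the constant term of $\js(3,2;z)=-(3z+5)$, namely $-5$, and that is the coefficient of $t^3$ in $-(t^2+t^3)/(1-t)^4$), whereas the right-hand side of \eqref{eq:first} with the printed bounds $1\le j\le 2$ yields only $-t^2/(1-t)^4$. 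So the summation bounds in \eqref{eq:first} are an error in the statement itself (they should read $k+1\le j\le 3k-i$, or be left unrestricted with the convention that out-of-range coefficients vanish), and your attempt to justify the printed bounds via ``$a_{k,i,j}=0$ for $j>2k-i$'' does not work; carried out honestly, your own computation would produce the corrected bounds rather than the ones displayed.
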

\begin{proof}
From \eqref{eq:js} we deduce
\begin{align}
\left\{ \begin{array}{l} \js(0,0;z)=1, \qquad \js(n,k;z)=0, \quad  \text{if } k \not\in\{1,\ldots,n\}, \\
\label{eqlnk2} \js(n,k;z)= \js(n-1,k-1;z)-(n-1)(n-1+z)\,\js(n-1,k;z),  \quad   n,k \geq 1. \end{array} \right.
\end{align}
It follows from  the above recurrence and \eqref{eq:js1} that
$$
g_k(n;z)-g_k(n-1;z)=-(n-1)(n-1+z)g_{k-1}(n-1;z).
$$
Comparing with \eqref{eqf} we get \eqref{eq:relation}, which
implies that  $q_{k,i}(n)=(-1)^ip_{k,i}(-n)$.  Finally  \eqref{eq:first}  follows from \eqref{eq:inverse}.
\end{proof}
\section{Jacobi-Stirling posets}
We first recall some basic facts about Stanley's theory of $P$-partitions 
(see \cite{st0} and \cite[\S 4.5]{st}).  
 Let $P$ be a poset, and let $\o$ be a labeling of $P$, i.e., an injection from $P$ to a totally ordered set (usually a set of integers). A \emph{$(P,\o)$-partition} (or $P$-partition if $\o$ is understood) is a function $f$ from $P$ to the positive integers satisfying 
\begin{enumerate}
\item if $x\lp y$ then $f(x)\le f(y)$
\item if $x\lp y$ and $\o(x) > \o(y)$ then $f(x) < f(y)$.
\end{enumerate}

A \emph{linear extension} of a poset $P$ is an extension of $P$ to a total order. We will identify a linear extension of $P$ labeled by $\o$ with the permutation obtained by taking the labels of $P$ in increasing order
with respect to the linear extension. For example, the linear extensions of the poset shown in Figure~\ref{f-P2} are $2\,1\,3$ and $2\,3\,1$. We write $\L(P)$ for the set of linear extensions of $P$ (which also depend on the labeling $\o$).

\begin{figure}[h]
\begin{center}
\setlength {\unitlength} {1mm}
\begin {picture} (101.5,21) \setlength {\unitlength} {1.2mm}
\thicklines
\put(42,4){\circle*{2}}\put(41,0){$2$}
\put(42,4){\line(-1,1){7}}\put(35,11){\circle*{2}}\put(34,13){$1$}
\put(42,4){\line(1,1){7}}\put(49,11){\circle*{2}}\put(48,13){$3$}
\end{picture}
\end{center}
\caption{A poset}
\label{f-P2}
\end {figure}

The \emph{order polynomial} $\O_P(n)$ of $P$ is the number of $(P,\o)$-partitions with parts in $[n]=\{1,2,\dots, n\}$. 
It is known that $\O_P(n)$ is a polynomial in $n$ whose degree is the number of elements of $P$. The following  is 
a fundamental result in the  $P$-partition theory \cite[Theorem 4.5.14]{st}:
\begin{equation}\label{ppa}
\sum_{n\geq1} \Omega_P(n) t^n= \frac{\sum_{\pi\in \L(P)}t^{\des\,\pi +1}}{(1-t)^{k+1}},
\end{equation}
where $k$ is the number of elements of $P$ and $\des\,\pi$ is computed according to the natural order of the integers.

For example, the two linear extensions of the poset shown in Figure \ref{f-P2} each have one descent, and the order polynomial for this poset is $2\binom{n+1}{3}$.
So equation \eqref{ppa} reads
\begin{equation*}
\sum_{n\geq1} 2\binom{n+1}{3}t^n= \frac{2t^2}{(1-t)^{4}}.
\end{equation*}

By \eqref{eqlnk1}  the Jacobi-Stirling numbers have the generating function
\begin{align}\label{eq:gfJS}
\sum_{n\geq0}\JS(n,k;z) t^n = \frac{t^k}{(1-(z+1)t)(1-2(z+2)t)\cdots (1-k(z+k)t)},
\end{align}
As $f_k(n;z)=\JS(n+k,n;z)$, switching $n$ and $k$ in the last equation yields
$$
\sum_{k\geq0}f_k(n;z) t^k = \frac{1}{(1-(z+1)t)(1-2(z+2)t)\cdots (1-n(z+n)t)}.
$$
Identifying the coefficients of  $t^k$ gives
\begin{equation} 
\label{e-J2}
f_k(n;z) = \sum_{1\le j_1\le j_2\le \cdots \le j_k\le n}
j_1 (z+j_1)\cdot j_2 (z+j_2) \cdots j_k (z+j_k).
\end{equation}

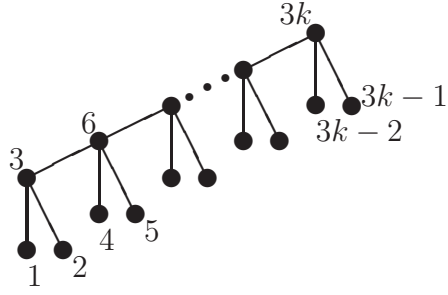
\begin{figure}
\begin {center}
\setlength {\unitlength} {1mm}
\begin {picture} (50,40) \setlength {\unitlength} {1.2mm}
\thicklines
\put(0,12){\circle*{2}}\put(0,12){\line(0,-1){8}}\put(0,4){\circle*{2}}\put(0,0){$1$}\put(-2,13){$3$}\put(0,12){\line(1,-2){4}}\put(4,4){\circle*{2}}\put(5,1){$2$}

\put(0,12){\line(2,1){8}}\put(8,16){\circle*{2}}\put(6,17){$6$}\put(8,16){\line(0,-1){8}}\put(8,8){\circle*{2}}\put(8,4){$4$}\put(8,16){\line(1,-2){4}}\put(12,8){\circle*{2}}\put(13,5){$5$}

\put(8,16){\line(2,1){8}}\put(16,20){\circle*{2}}\put(16,20){\line(0,-1){8}}\put(16,12){\circle*{2}}\put(18,21){\circle*{1}}\put(20,22){\circle*{1}}\put(22,23){\circle*{1}}\put(24,24){\circle*{2}}\put(16,20){\line(1,-2){4}}\put(20,12){\circle*{2}}

\put(24,24){\line(2,1){8}}\put(32,28){\circle*{2}}\put(28,29){$3k$}\put(24,24){\line(0,-1){8}}\put(24,16){\circle*{2}}\put(32,28){\line(0,-1){8}}\put(32,20){\circle*{2}}\put(32,16){$3k-2$}
\put(32,28){\line(1,-2){4}}\put(36,20){\circle*{2}}\put(37,20){$3k-1$}\put(24,24){\line(1,-2){4}}
\put(28,16){\circle*{2}}
\end{picture}
\end{center}
\caption{The labeled poset $R_k$.}
\label{f-C1}
\end {figure}

For any subset $S$ of $[k]$, we define $\gamma_{S,m}(j)$ by 
\begin{equation*}
\gamma_{S,m}(j) = 
\begin{cases}
j& \text{if $m\in S$},\\
j^2& \text{if $m\not\in S$},
\end{cases}
\end{equation*}
and define  $p_{k,S}(n) $ by
\begin{equation}\label{eqp}
p_{k,S}(n) = \sum_{1\le j_1\le j_2\le \cdots \le j_k\le n}
 \gamma_{S,1}(j_1)  \gamma_{S,2}(j_2)\cdots  \gamma_{S,k}(j_k).
\end{equation}


For example, if $k=2$ and $S=\{1\}$ then 
\begin{equation*}
p_{k,S}(n) = \sum_{1\le j_1\le j_2\le n} j_1 j_2^2=n(n+1)(n+2)(12n^2+9n-1)/120.
\end{equation*}


\begin{definition}
Let $R_k$ be the labeled poset in Figure~\ref{f-C1}. Let $S$ be a subset of $[k]$.  
 The poset $R_{k,S}$   obtained from $R_k$ by removing the points $3m-2$ for $m\in S$ is called a Jacobi-Stirling poset.
 \end{definition}

For example, the posets $R_{2,\{1\}}$ and $R_{2,\{2\}}$ are shown in Figure \ref{f-R1}.

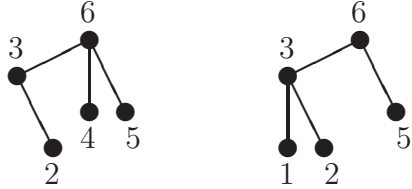
\begin{figure}[h!]
\begin {center}
\setlength {\unitlength} {1mm}
\begin {picture} (55,25) \setlength {\unitlength} {1.2mm}
\thicklines
\put(1,12){\circle*{2}}\put(1,12){\line(1,-2){4}}\put(5,4){\circle*{2}}
\put(0,14){$3$}\put(4,0){$2$}
\put(1,12){\line(2,1){8}}\put(9,16){\circle*{2}}\put(8,18){$6$}
\put(9,16){\line(0,-1){8}}\put(9,8){\circle*{2}}\put(8,4){$4$}
\put(9,16){\line(1,-2){4}}\put(13,8){\circle*{2}}\put(13,4){$5$}

\put(31,12){\circle*{2}}\put(31,12){\line(1,-2){4}}\put(35,4){\circle*{2}}
\put(31,12){\line(0,-1){8}}\put(31,4){\circle*{2}} \put(30,0){$1$}
\put(30,14){$3$}\put(35,0){$2$}
\put(31,12){\line(2,1){8}}\put(39,16){\circle*{2}}\put(38,18){$6$}
\put(39,16){\line(1,-2){4}}\put(43,8){\circle*{2}}\put(43,4){$5$}
\end{picture}
\end{center}
\caption{The labeled posets $R_{2,\{1\}}$ and $R_{2,\{2\}}$.}
\label{f-R1}
\end{figure}

\begin{lemma}\label{lea} 
For any subset $S\subseteq  [k]$, let  $A_{k,S}(t)$ be the descent polynomial of $\L(R_{k,S})$, i.e.,
the coefficient of $t^j$ in $A_{k,S}(t)$ is the number of linear extensions of $R_{k,S}$ with $j-1$ descents,
then 
\begin{equation}\label{AS}
\sum_{n\geq0} p_{k,S}(n) t^n = \frac{A_{k,S}(t)}{(1-t)^{3k-|S|+1}}.
\end{equation}
\end{lemma}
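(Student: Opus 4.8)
The plan is to identify the order polynomial $\O_{R_{k,S}}(n)$ with $p_{k,S}(n)$ and then invoke the fundamental $P$-partition identity \eqref{ppa}. The poset $R_k$ is built from $k$ consecutive ``bowtie''-like blocks, where the $m$-th block consists of the three vertices $3m-2$, $3m-1$, $3m$ together with the comparisons that force $3m$ to dominate everything below it and to be dominated by the bottom vertex of the $(m{+}1)$-st block. Concretely, a $(R_k,\o)$-partition $f$ is determined by the values $f(3m)=:j_m$ for $m=1,\dots,k$, and the structure of the poset and its labeling imposes exactly the chain condition $j_1\le j_2\le\cdots\le j_k$ on these ``hub'' values, while the two remaining vertices in each block ($3m-2$ and $3m-1$, labeled so that they sit below $3m$ with one strict and one weak inequality) can be filled in freely subject to their own constraints. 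The first step is therefore to read off from Figure~\ref{f-C1} the precise cover relations and the labeling $\o$, and to verify that for a fixed choice of hub values $j_1\le\cdots\le j_k$ in $[n]$, the number of ways to extend $f$ to the vertex $3m-2$ is $j_m$ (the strict-inequality branch contributes a factor counting values $<j_m$, but careful bookkeeping of which vertex carries the strict versus weak inequality gives exactly $j_m$ choices) and to the vertex $3m-1$ is $j_m$ as well, so that block $m$ contributes $j_m^2$ when both vertices are present.

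Once that local count is established, summing over all admissible hub sequences gives
\[
\O_{R_k}(n)=\sum_{1\le j_1\le\cdots\le j_k\le n} j_1^2 j_2^2\cdots j_k^2,
\]
and deleting the vertex $3m-2$ for each $m\in S$ simply removes one of the two factors $j_m$ from block $m$, replacing $j_m^2$ by $j_m$; that is, $\O_{R_{k,S}}(n)=p_{k,S}(n)$ by the definition \eqref{eqp} of $\gamma_{S,m}$ and $p_{k,S}(n)$. The second step is to count the number of elements of $R_{k,S}$: it is $3k-|S|$, so the denominator in \eqref{ppa} is $(1-t)^{3k-|S|+1}$, matching \eqref{AS}. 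The third step is to apply \eqref{ppa} directly: it yields
\[
\sum_{n\ge1}\O_{R_{k,S}}(n)\,t^n=\frac{\sum_{\pi\in\L(R_{k,S})}t^{\des\,\pi+1}}{(1-t)^{3k-|S|+1}},
\]
and since $p_{k,S}(0)=0$ (the empty sum) the left side equals $\sum_{n\ge0}p_{k,S}(n)t^n$, while the numerator on the right is by definition $A_{k,S}(t)$, the descent polynomial of $\L(R_{k,S})$. This is exactly \eqref{AS}.

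The main obstacle I expect is the first step: making the correspondence between $(R_{k,S},\o)$-partitions and weighted hub-sequences fully rigorous, i.e., pinning down from the figure exactly which of the two ``pendant'' vertices in each block carries the strict inequality and checking that the labeling $\o$ (the integers attached to the vertices) is such that condition (2) in the definition of $(P,\o)$-partition produces a strict inequality on one branch and a weak one on the other in just the right way to give the factor $j_m^2$ rather than, say, $j_m(j_m-1)$ or $\binom{j_m+1}{2}$-type counts. One clean way to handle this is to decompose $R_k$ as an ordinal-type assembly of the blocks and argue blockwise, noting that the hub vertex $3m$ is comparable to (and above) both pendants of its block and below the bottom of the next block, so the value $j_m$ is squeezed between $j_{m-1}$ and $j_{m+1}$; then the two pendants of block $m$ range independently over $\{1,\dots,j_m\}$ (the weak branch) and $\{1,\dots,j_m\}$ again once the labels are chosen so the nominally strict branch still allows equality-count $j_m$ — I would double-check this against the worked example $k=2$, $S=\{1\}$ given just before the lemma, where the formula $p_{2,\{1\}}(n)=\sum_{1\le j_1\le j_2\le n}j_1 j_2^2$ and the poset $R_{2,\{1\}}$ in Figure~\ref{f-R1} should visibly agree. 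After that, steps two and three are routine applications of \eqref{ppa} and the degree count from Proposition~\ref{prop:1}-style reasoning, so no further difficulty is anticipated.
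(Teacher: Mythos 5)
Your proposal is correct and follows the same route as the paper, which simply observes that $\O_{R_{k,S}}(n)=p_{k,S}(n)$ and invokes \eqref{ppa}. The one point you flag as uncertain resolves in the simplest possible way: the labeling of $R_k$ is natural (each element's label is smaller than the labels of the elements above it), so all $(P,\o)$-partition constraints are weak inequalities and the two pendants of block $m$ each range freely over $\{1,\dots,j_m\}$, giving the factor $j_m^2$ (or $j_m$ when $3m-2$ is deleted) exactly as you predicted.
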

\begin{proof}
It is easy to see that $\O_{R_{k,S}}(n)=p_{k,S}(n)$ and the result follows from \eqref{ppa}.
\end{proof}

For $0\leq i\leq k$,   $R_{k,i}$ is defined as the set of ${k \choose i}$ posets
$$
R_{k,i}=\{\,R_{k,S}\ |\ S\subseteq[k]\text{ with cardinality $i$}\,\}.
$$ The posets in $R_{2,1}$ are shown in Figure \ref{f-R1}.
We define $\mathscr{L}(R_{k,i})$ to be the (disjoint)  union  of $\mathscr{L}(P)$, over all $P\in R_{k,i}$; i.e.,
\begin{align}
\mathscr{L}(R_{k,i})=\bigcup_{S\subseteq [k]\atop |S|=i}\mathscr{L}(R_{k,S}).
\end{align}

Now we are ready to give the first interpretation of the coefficients $a_{k,i,j}$ in the polynomial
$A_{k,i}(t)$  defined in  \eqref{eqa}.
\begin{theorem}\label{th:main}
We have 
\begin{align}\label{eq:AKI=AKS}
A_{k,i}(t)=\sum_{S\subseteq[k]\atop |S|=i}A_{k,S}(t).
\end{align}
In other words, the integer   $a_{k,i,j}$ is 
the number of elements   of $\mathscr{L}(R_{k,i})$  with $j-1$ descents.
\end{theorem}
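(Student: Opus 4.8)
The plan is to show that the two-variable generating function for the Jacobi-Stirling coefficient $p_{k,i}(n)$ agrees, term by term in $n$, with the sum over $i$-subsets $S\subseteq[k]$ of the order polynomials $\O_{R_{k,S}}(n)=p_{k,S}(n)$, and then to invoke Lemma~\ref{lea}. Concretely, the identity \eqref{eq:AKI=AKS} is equivalent, after multiplying by $(1-t)^{-(3k-i+1)}$ and using \eqref{eqa} together with \eqref{AS}, to the numerical identity
\begin{equation*}
p_{k,i}(n)=\sum_{S\subseteq[k],\,|S|=i}p_{k,S}(n)\qquad\text{for all }n\geq0.
\end{equation*}
So the whole theorem reduces to proving this last displayed equation, and the combinatorial reformulation ("$a_{k,i,j}$ counts elements of $\mathscr{L}(R_{k,i})$ with $j-1$ descents") is then immediate from the definition of $\mathscr{L}(R_{k,i})$ as the disjoint union of the $\mathscr{L}(R_{k,S})$ and from \eqref{ppa} applied to each $R_{k,S}$.

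First I would establish the key identity from the explicit summation formulas already derived. From \eqref{e-J2} we have
\begin{equation*}
f_k(n;z)=\sum_{1\le j_1\le\cdots\le j_k\le n}\prod_{m=1}^{k}j_m(z+j_m)=\sum_{1\le j_1\le\cdots\le j_k\le n}\prod_{m=1}^{k}\bigl(j_m^2+z\,j_m\bigr).
\end{equation*}
Expanding the product $\prod_{m=1}^k(j_m^2+z j_m)$ over all subsets $S\subseteq[k]$ — where $m\in S$ records the choice of the term $z j_m$ and $m\notin S$ records $j_m^2$ — gives exactly $\sum_{S\subseteq[k]}z^{|S|}\prod_{m=1}^k\gamma_{S,m}(j_m)$ with $\gamma_{S,m}$ as defined before \eqref{eqp}. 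Summing over the chains $1\le j_1\le\cdots\le j_k\le n$ and comparing the coefficient of $z^i$ with \eqref{e-f1} yields
\begin{equation*}
p_{k,i}(n)=\sum_{S\subseteq[k],\,|S|=i}\ \sum_{1\le j_1\le\cdots\le j_k\le n}\prod_{m=1}^{k}\gamma_{S,m}(j_m)=\sum_{S\subseteq[k],\,|S|=i}p_{k,S}(n),
\end{equation*}
which is precisely what we wanted.

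Next I would verify the order-polynomial claim $\O_{R_{k,S}}(n)=p_{k,S}(n)$, which is the content invoked in the proof of Lemma~\ref{lea} and which I should spell out here: a $(R_{k,S},\o)$-partition with parts in $[n]$ assigns to the $m$-th "block" of the poset $R_{k,S}$ a value $j_m$, the chain structure of $R_k$ forces $j_1\le j_2\le\cdots\le j_k$, and for each $m$ the local configuration contributes a factor $j_m^2$ when the block has three elements ($m\notin S$) and $j_m$ when it has two elements ($m\in S$), because of how the labels $\o$ induce strict versus weak inequalities within a block; summing over all valid assignments gives \eqref{eqp}. With $\O_{R_{k,S}}(n)=p_{k,S}(n)$ in hand, \eqref{ppa} gives $\sum_{n\ge1}p_{k,S}(n)t^n=A_{k,S}(t)/(1-t)^{3k-|S|+1}$ with $A_{k,S}(t)$ the descent polynomial of $\L(R_{k,S})$, and summing over all $i$-subsets $S$ and using the identity just proved finishes \eqref{eq:AKI=AKS}; the "in other words" sentence follows since descents are computed uniformly and the union defining $\mathscr{L}(R_{k,i})$ is disjoint.

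The main obstacle I anticipate is getting the local factor computation in the order-polynomial claim exactly right — i.e., confirming that the specific labeling $\o$ of the three-element "cherry" blocks in Figure~\ref{f-C1} produces a factor of $j_m^2=\#\{(a,b):1\le a\le b\le j_m\}$ rather than, say, $\binom{j_m+1}{2}$ or $\binom{j_m}{2}$, and that removing the point $3m-2$ for $m\in S$ indeed collapses this to the factor $j_m$. This hinges on a careful reading of which cover relations are "strict" (label decreasing along the edge) versus "weak", and on how consecutive blocks are glued (the top of block $m$ lies below the bottom of block $m+1$, forcing $j_m\le j_{m+1}$ with equality allowed). Everything else — the subset expansion of the product, the comparison of coefficients of $z^i$, and the passage from generating functions to descent polynomials via \eqref{ppa} — is routine bookkeeping.
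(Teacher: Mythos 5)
Your proposal is correct and follows essentially the same route as the paper: extract the coefficient of $z^i$ in \eqref{e-J2} by expanding $\prod_m(j_m^2+zj_m)$ over subsets $S\subseteq[k]$ to get $p_{k,i}(n)=\sum_{|S|=i}p_{k,S}(n)$, then compare generating functions via Lemma~\ref{lea} and \eqref{eqa}. The only difference is that you spell out the verification $\O_{R_{k,S}}(n)=p_{k,S}(n)$ (which the paper dismisses as "easy to see" inside Lemma~\ref{lea}), and your reading of the cover relations and local factors there is accurate.
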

\begin{proof} Extracting the coefficient of $z^i$ in both sides of \eqref{e-J2}, then
applying  \eqref{e-f1} and \eqref{eqp},   we obtain 
\begin{equation*}
p_{k,i}(n) = \sum_{S\subseteq[k] \atop |S|=i} p_{k,S}(n),
\end{equation*}
so that 
\begin{align*}
\sum_{n\geq0}p_{k,i}(n)t^n=\sum_{n\geq0} \sum_{S} p_{k,S}(n)t^n= \sum_S\sum_{n\geq0}p_{k,S}(n)t^n,
\end{align*}
where the summations on $S$ are over all subsets of $[k]$ with cardinality $i$.
 The result follows then by comparing \eqref{eqa} and \eqref{AS}.
\end{proof}

It is easy to compute $A_{k,S}(1)$ which is equal to $|\L(R_{k,S})|$ and is also  $(3k-i)!$ times the leading coefficient of $p_{k,S}(n)$. 

\begin{proposition}\label{prop:12}
 Let $S\subseteq [k],\,|S|=i$ and let $l_j(S)=|\{\,s\in S\ |\ s\leq j\,\}|$ for $1\leq j\leq k$. We have 
\begin{equation}\label{asl}
A_{k,S}(1)=\frac{(3k-i)!}{\prod_{j=1}^{k}(3j-l_j(S))}.
\end{equation}
\end{proposition}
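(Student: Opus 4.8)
The plan is to deduce $A_{k,S}(1)$ from the leading coefficient of the order polynomial $p_{k,S}(n)=\O_{R_{k,S}}(n)$, and then to compute that leading coefficient directly from the closed form \eqref{eqp}. Put $N:=3k-i=|R_{k,S}|$. Since $A_{k,S}(t)=\sum_{\pi\in\L(R_{k,S})}t^{\des\pi+1}$, we have $A_{k,S}(1)=|\L(R_{k,S})|$; and, as remarked just before the statement, \eqref{AS} together with the standard fact that the numerator of $\sum_{n\ge0}p(n)t^n=B(t)/(1-t)^{N+1}$ satisfies $B(1)=N!$ times the coefficient of $n^N$ in $p(n)$, for any polynomial $p$ of degree $\le N$ (cf.\ \cite[Corollary~4.3.1]{st}), gives
\[
A_{k,S}(1)=N!\cdot c,\qquad c:=\text{coefficient of }n^N\text{ in }p_{k,S}(n).
\]
So it suffices to show $c=1/\prod_{j=1}^k(3j-l_j(S))$; in particular this forces $\deg p_{k,S}=N$.

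To find $c$, I would write $e_m:=1$ if $m\in S$ and $e_m:=2$ otherwise, so that $\gamma_{S,m}(j)=j^{e_m}$ and
\[
p_{k,S}(n)=\sum_{1\le j_1\le j_2\le\cdots\le j_k\le n}j_1^{e_1}j_2^{e_2}\cdots j_k^{e_k}.
\]
Summing the variables one at a time from $j_1$ outward, using $\sum_{j=1}^{m}j^{e}=\frac{m^{e+1}}{e+1}+O(m^{e})$, an immediate induction on $k$ shows that $p_{k,S}(n)$ is a polynomial in $n$ of degree $k+\sum_{m=1}^k e_m=k+(2k-i)=N$ whose coefficient of $n^N$ is
\[
c=\prod_{\ell=1}^{k}\frac{1}{E_\ell+\ell},\qquad E_\ell:=e_1+e_2+\cdots+e_\ell
\]
(equivalently $c=\int_{0\le y_1\le\cdots\le y_k\le1}y_1^{e_1}\cdots y_k^{e_k}\,dy_1\cdots dy_k$, evaluated by the same iterated integration). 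Finally, for each $\ell$,
\[
E_\ell+\ell=\sum_{m=1}^{\ell}(e_m+1)=2\,l_\ell(S)+3\bigl(\ell-l_\ell(S)\bigr)=3\ell-l_\ell(S),
\]
so $c=\prod_{\ell=1}^k(3\ell-l_\ell(S))^{-1}$ and $A_{k,S}(1)=N!\,c=(3k-i)!\big/\prod_{j=1}^{k}(3j-l_j(S))$, which is \eqref{asl}.

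Almost everything here is bookkeeping; the only step that needs a short argument is the induction identifying the coefficient of $n^N$ in the simplex sum with $c$. The point is simply that at the $\ell$-th summation only the leading term $m^{e_\ell+1}/(e_\ell+1)$ of $\sum_{j=1}^m j^{e_\ell}$ contributes to the eventual top-degree coefficient, so the lower-order terms may be discarded throughout; this keeps the proof elementary and self-contained. Alternatively, one may invoke the standard fact that a polynomial summed over the discrete simplex $1\le j_1\le\cdots\le j_k\le n$ has the same leading term as the corresponding volume integral over $0\le y_1\le\cdots\le y_k\le1$, and evaluate that integral by the iterated computation above.
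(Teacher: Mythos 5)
Your argument is correct, but it proves the identity by a different route than the paper does. You exploit the remark preceding the statement --- that $A_{k,S}(1)$ is $(3k-i)!$ times the leading coefficient of the order polynomial $p_{k,S}(n)$ --- and then extract that leading coefficient from the explicit simplex sum \eqref{eqp} by iterated summation (equivalently, by the volume integral $\int_{0\le y_1\le\cdots\le y_k\le1}y_1^{e_1}\cdots y_k^{e_k}\,dy$), which telescopes to $\prod_{\ell}(E_\ell+\ell)^{-1}=\prod_{\ell}(3\ell-l_\ell(S))^{-1}$; all the individual steps (the $B(1)=N!\,c$ fact from \cite[Corollary 4.3.1]{st}, the degree count $N=3k-i$, and the observation that only the top term of $\sum_{j\le m}j^{e}$ survives into the leading coefficient) are sound. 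The paper instead uses the other interpretation, $A_{k,S}(1)=|\L(R_{k,S})|$, and counts linear extensions directly: reading the elements of $R_{k,S}$ in increasing label order, the element $3j$ is forced into the last position (giving a single choice, namely its reading number $3j-l_j(S)$, in the denominator), while every other element may be inserted into any of the slots available at its reading step; the product of the free choices is $(3k-i)!$ divided by the forced factors. Your computation is shorter and purely formal, requiring no insight into the poset beyond formula \eqref{eqp}; the paper's insertion argument is more combinatorial and is the natural warm-up for the bijection $\phi$ of Section 4 and for the partition-counting proof of Proposition \ref{prop:A1}, so it earns its keep later. Either proof is acceptable here.
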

\begin{proof}
We construct a permutation in $\L(R_{k,S})$ by reading  the elements of $R_{k,S}$  in increasing order of their labels and  inserting each one into the permutation already constructed from the earlier elements. Each element of $R_{k,S}$ will have two natural numbers associated to it:  the reading number
and the insertion-position number.
It is clear that the insertion-position number of $3j$ must be equal to its reading number, which is $3j - l_j(S)$, since it must be inserted to the right of all the previously inserted elements (those with labels less than $3j$). On the other hand, an element not divisible by 3 may be inserted anywhere, so its number of possible insertion positions is equal to 
its reading  number. So the number of possible linear extensions of $R_{k,S}$ is equal to the product of the reading numbers of all elements with labels not divisible by 3. Since the product of all the reading numbers is $(3j-i)!$, we obtain the result by dividing this number by the product of 
the reading numbers of the elements with labels $3, 6, \ldots, 3k$.
\end{proof}

From \eqref{asl}  we can derive  the formula for  $A_{k,i}(1)$, which is equivalent to Proposition~\ref{prop:1}.

\begin{proposition} 
\label{prop:A1}
We have
$$|\L(R_{k,i})|=A_{k,i}(1) = \frac{(3k-i)!}{3^{k-i}2^i \, i!\,(k-i)!}.$$
\end{proposition}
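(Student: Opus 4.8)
The plan is to deduce Proposition~\ref{prop:A1} from Proposition~\ref{prop:1} together with the standard correspondence between the leading coefficient of a polynomial and the value at $t=1$ of the numerator of its (rational) generating function. First I would dispose of the equality $|\L(R_{k,i})|=A_{k,i}(1)$: by definition $\L(R_{k,i})$ is the disjoint union of the $\L(R_{k,S})$ over $S\subseteq[k]$ with $|S|=i$, and by Lemma~\ref{lea} each $A_{k,S}(t)$ is the descent polynomial of $\L(R_{k,S})$, so setting $t=1$ simply counts all linear extensions; hence $|\L(R_{k,i})|=\sum_{|S|=i}|\L(R_{k,S})|=\sum_{|S|=i}A_{k,S}(1)=A_{k,i}(t)\big|_{t=1}$ by Theorem~\ref{th:main}.

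It then remains to evaluate $A_{k,i}(1)$. By Lemma~\ref{lem5}, $\sum_{n\geq0}p_{k,i}(n)t^n=A_{k,i}(t)/(1-t)^{3k-i+1}$, and by Proposition~\ref{prop:1} the series on the left is the generating function of a polynomial $p_{k,i}(n)$ of degree exactly $3k-i$ whose leading coefficient is $1/(3^{k-i}2^i\,i!\,(k-i)!)$. Expanding the right-hand side by the binomial series and writing $A_{k,i}(t)=\sum_j a_{k,i,j}t^j$ gives $p_{k,i}(n)=\sum_j a_{k,i,j}\binom{n-j+3k-i}{3k-i}$ for large $n$; since each $\binom{n-j+3k-i}{3k-i}$ has leading term $n^{3k-i}/(3k-i)!$ independent of $j$, comparing coefficients of $n^{3k-i}$ yields $(\text{leading coefficient of }p_{k,i})=A_{k,i}(1)/(3k-i)!$. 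Combining this with Proposition~\ref{prop:1} gives $A_{k,i}(1)=(3k-i)!/(3^{k-i}2^i\,i!\,(k-i)!)$, which together with the first paragraph is exactly the claim.

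As an alternative route I would give a purely combinatorial derivation from Proposition~\ref{prop:12}: using $A_{k,i}(1)=\sum_{|S|=i}A_{k,S}(1)$ and \eqref{asl}, it suffices to prove
\[
\sum_{S\subseteq[k],\,|S|=i}\ \prod_{j=1}^{k}\frac{1}{3j-l_j(S)} \;=\; \frac{1}{3^{k-i}\,2^i\,i!\,(k-i)!}.
\]
Calling the left-hand side $c_{k,i}$ and splitting the sum according to whether $k\in S$ or $k\notin S$ (in both cases $l_k(S)=i$, so the $j=k$ factor is $3k-i$, while the truncated subset lies in $[k-1]$ and has size $i-1$ or $i$ respectively) gives the recurrence $c_{k,i}=\frac1{3k-i}(c_{k-1,i}+c_{k-1,i-1})$ with $c_{0,0}=1$; this is precisely the recurrence for the leading coefficient $\beta_{k,i}$ established inside the proof of Proposition~\ref{prop:1}, so $c_{k,i}=\beta_{k,i}$ by induction on $k$.

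I do not anticipate a real obstacle: both routes amount to bookkeeping once Propositions~\ref{prop:1} and~\ref{prop:12} and Theorem~\ref{th:main} are available. The only points needing a little care are the correct use of the leading-coefficient/denominator dictionary for rational generating functions (which requires knowing the degree of $p_{k,i}$ is exactly $3k-i$, supplied by Proposition~\ref{prop:1}), and the observation that specializing at $t=1$ is legitimate because $A_{k,S}(t)$ is a genuine polynomial equal to the descent enumerator of $\L(R_{k,S})$.
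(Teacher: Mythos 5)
Your proof is correct, and your main route is genuinely different from the one the paper takes. The paper evaluates $A_{k,i}(1)$ by first computing each $A_{k,S}(1)$ via Proposition~\ref{prop:12} and then proving the summation identity \eqref{eq:comid} by induction on $i$ using the indefinite sum \eqref{eq:identity} (it also supplies a bijective proof matching natural labelings of the posets $R_{k,S}$ with set partitions of $[3k-i]$ having $k-i$ blocks of size $3$ and $i$ blocks of size $2$). You instead read the answer directly off Proposition~\ref{prop:1}: since $p_{k,i}$ has degree exactly $3k-i$ with known leading coefficient, the standard dictionary $p_{k,i}(n)=\sum_j a_{k,i,j}\binom{n-j+3k-i}{3k-i}$ forces $A_{k,i}(1)=(3k-i)!\cdot\beta_{k,i}$; combined with $|\L(R_{k,i})|=\sum_{|S|=i}|\L(R_{k,S})|=\sum_{|S|=i}A_{k,S}(1)=A_{k,i}(1)$ from Lemma~\ref{lea} and Theorem~\ref{th:main}, this is a complete and non-circular proof (Proposition~\ref{prop:1} is established independently by the recurrence for $\beta_{k,i}$). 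The paper in fact remarks just before the proposition that the statement ``is equivalent to Proposition~\ref{prop:1},'' which is precisely the equivalence you exploit; what your shortcut gives up is the explicit combinatorial content (the identity \eqref{eq:comid} and the bijection with partitions), which is why the authors go the longer way. Your alternative route, splitting the sum $\sum_{|S|=i}\prod_j(3j-l_j(S))^{-1}$ according to whether $k\in S$ to recover the recurrence $c_{k,i}=\frac{1}{3k-i}(c_{k-1,i}+c_{k-1,i-1})$, is essentially the paper's own Remark following the proposition, restated in terms of $c_{k,i}=A_{k,i}(1)/(3k-i)!$.
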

 \begin{proof}
 By Proposition \ref{prop:12} it is sufficient to prove the identity
 \begin{align}\label{eq:comid}
 \sum_{1\leq s_1<\cdots <s_i\leq k}\frac{(3k-i)!}{\prod_{j=1}^{k}(3j-l_j(S))}
   =\frac{(3k-i)!}{3^{k-i}2^i \, i!\,(k-i)!},
 \end{align}
where $S=\{s_1, \ldots, s_i\}$ and $l_j(S)=|\{\,s\in S\,: \,s\leq j\,\}|$.

 The identity is obvious  if  $S=\varnothing$, i.e., $i=0$. When $i=1$, it is easy to see that \eqref{eq:comid} is equivalent to the   $a=2/3$ case of 
 the indefinite summation
 \begin{align}\label{eq:identity}
 \sum_{s=0}^{k-1} \frac{(a)_s}{s!}=\frac{(a+1)_{k-1}}{(k-1)!},
 \end{align}
 where $(a)_n=a(a+1)\cdots (a+n-1)$ and $(a)_0=1$.
Since the left-hand side of \eqref{eq:comid} can be written as 
\begin{align}
\sum_{s_i=i}^k \frac{(3k-i)!}{\prod_{j=s_i}^k (3j-i)}\sum_{1\leq s_1<\cdots <s_{i-1}\leq s_i-1} \frac{1}{\prod_{j=1}^{s_i-1}(3j-l_j(S))},
\end{align}
we derive   \eqref{eq:comid}   from the induction hypothesis and \eqref{eq:identity}.
 \end{proof}
 
 \begin{remark}
 Alternatively,  we  may prove 
the formula for $A_{k,i}(1)$
as follows:
\begin{align*}
A_{k,i}(1)&=\sum_{S\subseteq[k]\atop |S|=i}A_{k,S}(1)\\
&=\sum_{S\subseteq[k]\atop |S|=i, k\in S} A_{k,S}(1)+\sum_{S\subseteq[k]\atop |S|=i, k\notin S} A_{k,S}(1)\\
&=(3k-i-1)A_{k-1,i-1}(1)+(3k-i-1)(3k-i-2)A_{k-1,i}(1),
\end{align*}
from which we easily deduce that $A_{k,i}(1)={(3k-i)!}/{3^{k-i}2^i\, i!\,(k-i)!}$.
\end{remark}

Since both of the above  proofs  of Proposition \ref{prop:A1} use mathematical induction, 
it is desirable to have  a more conceptual proof.
Here we give  such a proof based on the fact that Proposition \ref{prop:A1} is equivalent to 
\begin{align}\label{eq:prop13}
|\L(R_{k,i})|=2^{k-i} \cdot \frac{(3k-i)!}{2!^i \,i!\, 3!^{k-i}(k-i)!}.
\end{align}

\begin{proof}[A combinatorial proof of Proposition \ref{prop:A1}]
We show that 
$|\L(R_{k,i})|$  is equal to $2^{k-i}$ times the number of 
partitions of $[3k-i]$ with $k-i$ blocks of size 3 and $i$ blocks of size 2.

Let $S$ be an $i$-element subset of $[k]$ and let $\pi$ be an element $\L(R_{k,S})$, viewed as a bijection from  $[3k-i]$ to 
$R_{k,S}$.  
Let $\sigma=\pi^{-1}$. Then $\sigma$ is a natural labeling of $R_{k,S}$, i.e., an order-preserving bijection from the poset  $R_{k,S}$ to $[3k-i]$, and conversely, every natural labeling of  $R_{k,S}$ is the inverse of an element of $\L(R_{k,S})$.  

We will describe a map from the set of natural labelings of elements 
of $R_{k,i}$ to the set of partitions of $[3k-i]$ with $k-i$ blocks of size 3 and $i$ blocks of size 2, for which each such partition is the image of $2^{k-i}$ natural labelings. Given a natural labeling $\sigma$  of $R_{k,S}$,
the blocks of the corresponding partition are the sets $\{\sigma(3m-2), \sigma(3m-1), \sigma(3m)\}$ for $m\notin S$ and the sets $\{\sigma(3m-1), \sigma(3m)\}$ for $m\in S$. 
We note that since $\sigma$ is a natural labeling, $\sigma(3m)$ is always the largest element of its block and $\sigma(3)<\sigma(6)<\cdots<\sigma(3m)$. 

Now let $P$ be a partition of $[3k-i]$ with $k-i$ blocks of size 3 and $i$ blocks of size 2. We shall describe all natural labelings $\sigma$ of posets $R_{k,S}$ that correspond to $P$ under the map just defined.
First, we list the blocks of $P$ as $B_1$, $B_2$, \dots, $B_k$  in increasing order of their largest elements. Then $\sigma(3m)$ must be the largest element of $B_m$. If $B_m$ has two elements,  then the smaller element must be $\sigma(3m-1)$, and $m$ must be an element of $S$. If $B_m$ has three elements then $m\notin S$, and $\sigma(3m-2)$ and $\sigma(3m-1)$ are the two smaller elements of $B_m$, but in either order. Thus 
$S$ is uniquely determined by $P$, and there are exactly $2^{k-i}$ natural labelings of $R_{k,S}$ in the preimage of $P$. So $|\L(R_{k,i})|$ is $2^i$ times the number of partitions of $[3k-i]$ with $k-i$ blocks of size 3 and $i$ blocks of size 2, and is therefore equal to the right-hand side  of \eqref{eq:prop13}.
\end{proof}

\section{Two proofs of Theorem~\ref{main1}}
We shall give two proofs of Theorem~\ref{main1}. 
We first derive  Theorem~\ref{main1} from Theorem~\ref{th:main} by constructing a bijection from 
the linear extensions of Jacobi-Stirling posets to permutations.
The second proof consists of verifying that the cardinality  of Jacobi-Stirling permutations in $\JSP_{k,i}$ with $j-1$ descents satisfies the recurrence relation \eqref{th:7}.
 Given a word $w=w_1w_2\ldots w_m$ of $m$ letters,  we define the $j$th slot of $w$ by the pair $(w_j, w_{j+1})$ for $j=0, \ldots, m$. By convention 
$w_0=w_{m+1}=0$.  A slot $(w_j, w_{j+1})$  is called a descent (resp. non-descent) slot if $w_j>w_{j+1}$ (resp. $w_j\leq w_{j+1}$).

\subsection{ First  proof of Theorem~\ref{main1}}
For any subset $S=\{s_1, \ldots, s_i\}$ of $[k]$ we define  $\bar S=\{\bar s_1, \ldots, \bar s_i\}$, which is a  subset of $[\bar k]$.
Recall that $\JSP_{k,\bar S}$ is  the set of Jacobi-Stirling permutations of $M_{k,\bar S}$.
We construct a bijection  $\phi : {\L}(R_{k,S})\to {\JSP}_{k,\bar S}$ such that  $\des\, \phi (\pi)=\des\, \pi$ for any $\pi\in {\L}(R_{k,S})$.

If $k=1$,  then  $\L(R_{1,0})=\{123, \;213\}$ and $\L(R_{1,1})=\{23\}$.
We define $\phi$ by
$$
\phi (123)=\bar{1}11,\ \phi (213)=11\bar{1},\ \phi (23)=11.
$$
Suppose that $k\geq 2$ and $\phi : {\L}(R_{k-1,S})\to \JSP_{k-1,\bar S}$  is defined for any $S\subseteq [k-1]$. 
If  $\pi\in{\L}(R_{k,S})$ with $S\subseteq [k]$, we consider the following two cases: 
\begin{itemize}
\item[(i)] $k\notin S$,  denote by $\pi'$ the word obtained by   deleting $3k$ and  $3k-1$ from $\pi$, and $\pi''$ the word obtained by further deleting $3k-2$ from $\pi'$. As
 $\pi''\in{\L}(R_{k-1,S})$, 
by induction hypothesis,  the permutation $\phi(\pi'')\in \JSP_{k-1,\bar S}$ is well defined. Now,
\begin{itemize}
\item[a)]
if   $3k-2$ is in the $r$th descent  (or nondescent) slot of $\pi''$, then we insert $\bar{k}$ in the $r$th descent (or nondescent) slot of $\phi(\pi'')$ and obtain a word $\phi_1(\pi'')$;
\item[b)]
if  $3k-1$ is  in the $s$th descent (or nondescent)  slot of $\pi'$, we define $\phi(\pi)$ by inserting  $kk$ in the $s$th descent (or nondescent) slot of $\phi_1(\pi'')$.
\end{itemize}
\item[(ii)] $k\in S$,  denote by $\pi'$  the word obtained from $\pi$ by deleting $3k$ and $3k-1$.
As $\pi'\in{\L}(R_{k-1,i-1})$, the permutation  $\phi(\pi')\in \JSP_{k-1,\bar S}$ is well defined.
If 
 $3k-1$ is  in the $r$th descent (or nondescent) slot of $\pi'$,  we define $\phi(\pi)$ by inserting $kk$ in the $r$th descent (or nondescent) slot of $\phi(\pi')$.
\end{itemize}

Clearly this mapping is a bijection and preserves the number of descents.
For example, if $k=3$ and $S=\{2\}$,  then $\phi(2513\red{78}6\red{9})=112\red{\bar{3}}2\red{33}\bar{1}$. This can be seen by applying the mapping $\phi$ as follows:
\begin{align*}
&213\rightarrow2\red{5}13\red{6}\rightarrow 2513\red{7}6\rightarrow 2513\red{78}6\red{9},\\
&11\bar{1}\rightarrow11\red{22}\bar{1}\rightarrow 112\red{\bar{3}}2\bar{1}\rightarrow 112\red{\bar{3}}2\red{33}\bar{1}.
\end{align*}
Clearly we have $\des(2{\bf 5}137{\bf 8}69)=2$ and $\des(112{\bf \bar{3}}23{\bf 3}\bar{1})=2$.
\subsection{ Second proof of Theorem~\ref{main1}}
Let  $\JSP_{k,i,j}$ be the set of Jacobi-Stirling permutations in $\JSP_{k,i}$ with $j-1$ descents.
Let  $a'_{0,0,0}=1$ and  $a'_{k,i,j}$ be the cardinality  of $\JSP_{k,i,j}$ for $k,i,j\geq 0$.
By definition, $a'_{k,i,j}=0$ if any of the indices $k,i,j<0$ or  $j\notin \{1, \dots, 2k-i\}$.
We show that 
$a'_{k,i,j}$'s  satisfy  the same recurrence \eqref{th:7} and initial conditions as $a_{k,i,j}$'s.

Any  Jacobi-Stirling permutation  of $\JSP_{k,i,j}$  can be obtained from one of the following five cases:
\begin{itemize}
\item[(i)] Choose a Jacobi-Stirling permutation in $\JSP_{k-1,i,j}$, insert $\bar{k}$ and then $kk$ in 
 one of the descent slots (an extra descent at the end of the permutation). Clearly, there are $a'_{k-1,i,j}$ ways to choose the initial permutation, $j$ 
 ways to insert $\bar{k}$, and $j$ ways to insert $kk$. 
\item[(ii)] Choose a Jacobi-Stirling permutation of $\JSP_{k-1,i,j-1}$, 
\begin{itemize}
\item[1)] insert $\bar{k}$ in a descent slot  and then $kk$ in  a non-descent slot. In this case, there are $a'_{k-1,i,j-1}$ ways to choose the initial permutation, $j-1$ ways to insert $\bar{k}$, and $3k-i-j-1$ ways to insert $kk$.
\item[2)] insert $\bar{k}$ in  a non-descent slot  and then $kk$ in a descent slot. In this case, there are $a'_{k-1,i,j-1}$ ways to choose the initial permutation, $3k-i-j-1$ ways to insert $\bar{k}$, and $j$ ways to insert $kk$.
\end{itemize}
\item[(iii)] Choose a Jacobi-Stirling permutation in $\JSP_{k-1,i, j-2}$,  insert $\bar{k}$ and then $kk$ in  one of the non-descent slots. In this case, there are $a'_{k-1,i,j-2}$ ways to choose the initial permutation, $3k-i-j$ ways to insert $\bar{k}$, and $3k-i-j$ ways to insert $kk$. 
\item[(iv)] Choose a Jacobi-Stirling permutation in $\JSP_{k-1,i-1, j}$ and insert $kk$ in one of the descent slots. There are $a'_{k-1,i-1,j}$ ways to choose the initial permutation, and $j$ ways to insert $kk$.
\item[(v)] Choose a Jacobi-Stirling permutation in $\JSP_{k-1,i-1, j-1}$ and  insert $kk$ in one of the non-descent slots. 
There are $a'_{k-1,i-1,j-1}$ ways to choose the initial permutation, and $3k-i-j$ ways to insert $kk$.
\end{itemize}
Summarizing all the above five  cases, we obtain
\begin{align*}
a'_{k,i,j}&=j^2a'_{k-1,i,j}+[2(j-1)(3k-i-j-1)+(3k-i-2)]a'_{k-1,i,j-1} \\
&+(3k-i-j)^2a'_{k-1,i,j-2}+ja'_{k-1,i-1,j}+(3k-i-j)a'_{k-1,i-1,j-1}.
\end{align*}
Therefore, the numbers $a'_{k,i,j}$  satisfy  the same recurrence and initial conditions as the $a_{k,i,j}$, so they are equal.

\section{Legendre-Stirling posets}

Let $P_k$ be the poset shown
in Figure~\ref{f-P},  called the \emph{Legendre-Stirling poset}.  
\begin{figure}[h]
\begin {center}
\setlength {\unitlength} {1mm}
\begin {picture} (50,40) \setlength {\unitlength} {1.2mm}
\thicklines
\put(0,12){\circle*{2}}\put(0,12){\line(0,-1){8}}\put(0,4){\circle*{2}}\put(0,0){$1$}\put(-2,13){$2$}\put(0,12){\line(1,-2){4}}\put(4,4){\circle*{2}}\put(5,1){$3$}

\put(0,12){\line(2,1){8}}\put(8,16){\circle*{2}}\put(6,17){$5$}\put(8,16){\line(0,-1){8}}\put(8,8){\circle*{2}}\put(8,4){$4$}\put(8,16){\line(1,-2){4}}\put(12,8){\circle*{2}}\put(13,5){$6$}

\put(8,16){\line(2,1){8}}\put(16,20){\circle*{2}}\put(16,20){\line(0,-1){8}}\put(16,12){\circle*{2}}\put(18,21){\circle*{1}}\put(20,22){\circle*{1}}\put(22,23){\circle*{1}}\put(24,24){\circle*{2}}\put(16,20){\line(1,-2){4}}\put(20,12){\circle*{2}}

\put(24,24){\line(2,1){8}}\put(32,28){\circle*{2}}\put(22.5,29.5){$3k-1$}\put(24,24){\line(0,-1){8}}\put(24,16){\circle*{2}}\put(32,28){\line(0,-1){8}}\put(32,20){\circle*{2}}\put(32,16){$3k-2$}
\put(32,28){\line(1,-2){4}}\put(36,20){\circle*{2}}\put(37,20){$3k$}\put(24,24){\line(1,-2){4}}
\put(28,16){\circle*{2}}
\end{picture}
\end{center}
\caption{The Legendre-Stirling poset $P_k$.}
\label{f-P}
\end {figure}
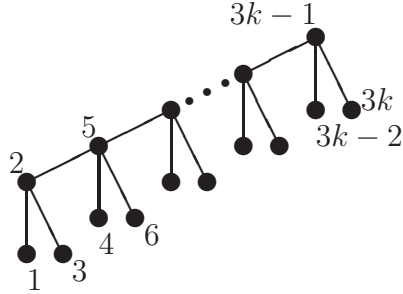
The order polynomial of $P_k$ is given by
\begin{align*}
\Omega_{P_k}(n)&=\sum_{2\leq f(2)\leq \dots \leq f(3k-1)\leq n}\prod_{i=1}^{k} f(3i-1)(f(3i-1)-1)\\
&=[x^k]\frac{1}{(1-2x)(1-6x)\dots (1-(n-1)nx)},
\end{align*}
which is equal to  
$\JS(n-1+k, n-1;1)$ by \eqref{eq:gfJS}, and by \eqref{defLS} this is equal to $\LS(n-1+k,n-1)$.
By   \eqref{ppa}, we obtain  
\begin{align}\label{LSP1} 
\sum_{n\geq0}\LS(n+k,n)t^n=\frac{\sum_{\pi\in \mathscr{L}(P_k)}t^{\des\,\pi}}{(1-t)^{3k+1}}.
\end{align}

In other words, we have the following theorem.
 \begin{theorem} \label{thm:lsposet}
 Let 
 $b_{k,j}$ be  the number of linear extensions of 
Legendre-Stirling posets $P_k$  with exactly $j$ 
descents.  Then
\begin{align}\label{LSP1} 
\sum_{n\geq0}\LS(n+k,n)t^n=\frac{\sum_{j=1}^{2k-1}b_{k,j}t^j}{(1-t)^{3k+1}}.
\end{align}

\end{theorem}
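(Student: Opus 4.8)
The plan is to realize $\LS(n+k,n)$ as an order polynomial and then apply the fundamental $P$-partition identity \eqref{ppa}, exactly mirroring the route just taken for the Jacobi-Stirling numbers but with the simpler poset $P_k$. First I would write down the order polynomial of $P_k$ directly from the Hasse diagram in Figure~\ref{f-P}: a $(P_k,\o)$-partition assigns to the chain of "spine" vertices $2,5,8,\dots,3k-1$ (labeled, and carrying the weight) a weakly increasing sequence $2\le f(2)\le f(5)\le\cdots\le f(3k-1)\le n$, while each pendant pair below a spine vertex $3i-1$ contributes the factor $f(3i-1)(f(3i-1)-1)$ coming from the two vertices labeled so as to force strict inequality below. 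This gives
\begin{align*}
\Omega_{P_k}(n)=\sum_{2\le j_1\le\cdots\le j_k\le n}\prod_{i=1}^k j_i(j_i-1),
\end{align*}
and I would note this is the coefficient extraction $[x^k]\prod_{m=2}^{n}(1-m(m-1)x)^{-1}$.

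Next I would identify this sum with $\LS(n+k,n)$ via the generating function \eqref{eq:gfJS}. Setting $z=1$ in \eqref{eq:gfJS} gives $\sum_{n\ge0}\JS(n,k;1)t^n = t^k\big/\prod_{m=1}^{k}(1-m(m+1)t)$; reindexing (the factors $m(m+1)$ for $m=1,\dots,k$ are exactly $j(j-1)$ for $j=2,\dots,k+1$) and comparing with the $[x^k]$ expression above identifies $\Omega_{P_k}(n)$ with $\JS(n-1+k,n-1;1)=\LS(n+k-1,n-1)$, hence with $\LS(n+k,n)$ after the appropriate index shift; this is precisely the computation already displayed in the paragraph preceding the theorem. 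Then \eqref{ppa}, applied with the poset $P_k$ which has exactly $3k-1$ elements, yields
\begin{align*}
\sum_{n\ge1}\Omega_{P_k}(n)t^n=\frac{\sum_{\pi\in\L(P_k)}t^{\des\,\pi+1}}{(1-t)^{3k}}.
\end{align*}

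Finally I would reconcile the exponents. The statement as written has denominator $(1-t)^{3k+1}$ and numerator $\sum_j b_{k,j}t^j$ with $b_{k,j}$ counting linear extensions with exactly $j$ descents, so I would track the index shift between "$\Omega_{P_k}(n)=\LS(n+k-1,n-1)$" and "$\sum_n\LS(n+k,n)t^n$" carefully: writing $m=n-1$ multiplies the series by a factor of $t^{-1}$ and thus lowers the numerator degree by one, absorbing the discrepancy so that the numerator becomes $\sum_{\pi\in\L(P_k)}t^{\des\,\pi}=\sum_{j}b_{k,j}t^j$ over $(1-t)^{3k+1}$... and here I must double-check whether the extra $(1-t)$ in the denominator is the correct bookkeeping or whether the intended normalization is $(1-t)^{3k+1}$ simply because $\O_{P_k}$ has degree $3k-1$ but one shifts by starting the sum at $n\ge0$ with an extra vanishing term — this reconciliation of the two index conventions is the only genuinely fiddly point, and I expect it to be the main obstacle, though it is entirely mechanical. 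The range $1\le j\le 2k-1$ for the nonzero $b_{k,j}$ then follows because a linear extension of a $(3k-1)$-element poset has at most $3k-2$ descents, but the constraints imposed by the $k$ forced positions of the spine maxima cut this down, exactly as in the Jacobi-Stirling case; alternatively one can deduce the upper bound $2k-1$ from Proposition analogues already established, or simply from the known polynomial degree of $\LS(n+k,n)$ in $n$, which is $3k$.

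Once these identifications are in place the theorem is immediate: $b_{k,j}:=|\{\pi\in\L(P_k):\des\,\pi=j\}|$ makes \eqref{LSP1} a verbatim instance of \eqref{ppa} after the index shift, and the polynomiality/degree bound on the numerator follows from the degree of the order polynomial together with Proposition~\ref{prop:1} specialized (or its Legendre analogue). I would present the proof in three short paragraphs: (1) compute $\O_{P_k}(n)$ from the diagram; (2) identify it with $\LS(n+k,n)$ using \eqref{eq:gfJS} and \eqref{defLS}; (3) invoke \eqref{ppa} and read off \eqref{LSP1}, noting the shift of index that fixes the exponent of $(1-t)$ and the descent statistic.
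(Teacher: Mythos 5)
Your route is exactly the paper's: compute $\Omega_{P_k}(n)$ from the Hasse diagram, identify it with $\LS(n-1+k,n-1)$ via \eqref{eq:gfJS} and \eqref{defLS}, and then apply \eqref{ppa}. The one place where your write-up goes wrong --- and which you flag as ``the only genuinely fiddly point'' but never actually resolve --- is the count of elements of $P_k$. The poset $P_k$ has $3k$ elements, not $3k-1$: its labels run over $1,2,\dots,3k$, each of the $k$ spine vertices $3i-1$ covering \emph{two} pendants $3i-2$ and $3i$ (which is precisely why each spine value $j_i$ contributes the factor $j_i(j_i-1)$, one factor per pendant). With the correct count, \eqref{ppa} gives
\begin{equation*}
\sum_{n\geq1}\Omega_{P_k}(n)\,t^n=\frac{\sum_{\pi\in\L(P_k)}t^{\des\,\pi+1}}{(1-t)^{3k+1}},
\end{equation*}
and since $\Omega_{P_k}(n)=\LS(n-1+k,n-1)$ the left-hand side equals $t\sum_{m\geq0}\LS(m+k,m)t^m$; dividing by $t$ converts the numerator exponent $\des\,\pi+1$ into $\des\,\pi$ and leaves the denominator untouched. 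So there is no ``extra $(1-t)$'' to account for: the index shift $n\mapsto n-1$ affects only the numerator, never the denominator, and the discrepancy you were trying to absorb is an artifact of the miscount. (Your own closing observation that $\LS(n+k,n)$ has degree $3k$ in $n$ already forces the denominator exponent to be $3k+1$, and should have signalled that the element count was off.) Once this is corrected, your three-step plan is precisely the paper's proof.
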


We now apply  the above theorem to deduce  a result of Egge~\cite[Theorem 4.6]{eg}.

\begin{definition}
A Legendre-Stirling permutation of $M_k$ is a Jacobi-Stirling permutation of 
$M_k$
with respect to the  order:  $\bar{1}=1<\bar{2}=2<\cdots<\bar{k}=k$.
\end{definition}

Here $\bar{1} = 1$ means that neither $1\bar{1}$ nor $\bar{1}1$ counts as a descent. Thus, the Legendre-Stirling permutation  $122\bar{2}1\bar{1}$   has one  descent  at  position  4, while 
as a Jacobi-Stirling permutation, it 
has three   descents, at  positions 3, 4 and  5.

 \begin{theorem}[Egge]
The coefficient  $b_{k,j}$ equals the number of 
Legendre-Stirling permutations of $M_k$ with exactly $j-1$ 
descents. 
\end{theorem}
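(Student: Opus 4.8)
The plan is to establish a descent-preserving bijection between the linear extensions of the Legendre-Stirling poset $P_k$ and the Legendre-Stirling permutations of $M_k$, so that Egge's theorem follows immediately from Theorem~\ref{thm:lsposet}. The natural approach is to mimic the construction in the first proof of Theorem~\ref{main1}: build the bijection $\psi\colon\L(P_k)\to\{$Legendre-Stirling permutations of $M_k\}$ recursively in $k$, by reading off how the three new points $3k-2$, $3k-1$, $3k$ of $P_k$ sit relative to the copy of $P_{k-1}$ sitting inside $P_k$. For $k=1$ one checks directly that $\L(P_1)=\{123,\,213\}$ maps to $\{1\bar1\,1,\,1\,1\bar1\}$ — wait, more carefully, the two linear extensions correspond to the two Legendre-Stirling permutations of $M_1=\{1,1,\bar1\}$, which (recalling $1=\bar1$) are really just the single ``word'' $11\bar1$ up to the identification; here the point is that $123$ and $213$ produce the two arrangements $\bar111$ and $11\bar1$, which have $0$ and $1$ descents respectively, matching $b_{1,1}=1$, $b_{1,2}=1$.

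For the inductive step, given $\pi\in\L(P_k)$, delete the points $3k$, $3k-1$, $3k-2$ to get $\pi''\in\L(P_{k-1})$, apply $\psi$ by induction to get a Legendre-Stirling permutation of $M_{k-1}$, then insert the new letters: insert $\bar k$ into the slot of $\psi(\pi'')$ corresponding (by descent-slot or non-descent-slot position) to the slot that $3k-2$ occupies in $\pi''$, then insert the pair $kk$ into the slot corresponding to the position of $3k-1$. One must check that the result is again a Legendre-Stirling permutation of $M_k$ (the defining ``sandwich'' condition is preserved because the two copies of $k$ are inserted adjacently and $\bar k$ is the other largest-block element), that $\des$ is preserved under the slot-to-slot correspondence, and that the map is invertible. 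A subtlety specific to the Legendre-Stirling case, absent in the Jacobi-Stirling case, is the identification $\bar k=k$ in the order on $M_k$: one has to be careful about what counts as a descent near occurrences of $\bar k$ and $k$, and verify that the slot bookkeeping used to transport descents from $P_k$ to the permutation is consistent with this convention. I expect this to be exactly where the argument needs care rather than routine verification.

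An alternative, arguably cleaner route — and the one I would present as a ``second proof'' — is to deduce Egge's theorem directly from Theorem~\ref{main1} rather than from Theorem~\ref{thm:lsposet}. The idea is to construct a map from Jacobi-Stirling permutations in $\bigcup_{i}\JSP_{k,i}$ to Legendre-Stirling permutations of $M_k$ that records, for each $m$, whether the block $\{m,m,\bar m\}$ had its $\bar m$ deleted, and to track how descents collapse under the identification $\bar m=m$. Concretely, a Legendre-Stirling permutation $w$ of $M_k$ with $d$ descents should correspond to the family of Jacobi-Stirling permutations obtained by choosing, for each of the at most two ``hidden'' adjacencies of the form $m\bar m$ or $\bar m m$ at each relevant spot, how to resolve it; summing over these choices reproduces $\sum_i a_{k,i,\cdot}$ with the right descent shift. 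Comparing the resulting generating-function identity with \eqref{LSP1} and with Theorem~\ref{main1} then yields $b_{k,j}=\sum_{i}$ (Jacobi-Stirling permutations with the appropriate descent count collapsing to $j-1$), and a direct combinatorial match finishes the proof.

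The main obstacle in either approach is the descent bookkeeping around the identification $\bar m = m$: in the poset picture this is invisible (the poset $P_k$ has no repeated-value issue), whereas in the permutation picture one must argue that precisely the right number of ``spurious'' descents disappear. I would handle this by setting up, once and for all, a careful notion of descent/non-descent slots in words over the alphabet $\{\bar1=1<\cdots<\bar k=k\}$ — as the paper already does for general words — and then checking that the recursive insertion rule sends descent slots to descent slots and non-descent slots to non-descent slots, with the single global ``$+1$'' accounted for by the boundary convention $w_0=w_{m+1}=0$. Once that lemma is in place, both the bijective proof and the ``deduce from Theorem~\ref{main1}'' proof become essentially formal.
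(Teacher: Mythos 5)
Your two routes are indeed the two proofs the paper gives, but in both cases the step you set aside as ``where the argument needs care'' is the entire content of the proof, and the concrete details you do supply are incorrect. For the bijective route: the Legendre-Stirling poset $P_k$ is labeled differently from $R_k$ --- the top of the $m$th diamond carries the label $3m-1$, with $3m-2$ and $3m$ below it --- so $\L(P_1)=\{132,\,312\}$, not $\{123,\,213\}$. Both elements of $\L(P_1)$ have one descent, and both Legendre-Stirling permutations of $M_1$ have zero descents (under $1=\bar 1$ the word $11\bar1$ has no descent), so $b_{1,1}=2$ and $b_{1,2}=0$, not $b_{1,1}=b_{1,2}=1$ as you claim; your base case already fails. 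Your insertion rule is keyed to the wrong elements for the same reason: in $P_k$ the letter $3k-1$ is the poset maximum and is \emph{forced} to the end of every linear extension, so it cannot be inserted into an arbitrary slot; the free insertions are $3k\leftrightarrow\bar k$ and $3k-2\leftrightarrow kk$. One must moreover exclude the slot after $3k-1$ when placing $3k$ and $3k-2$, and treat the non-descent slot immediately before $\bar k$ (resp.\ before $3k$) as a special case, precisely because inserting $kk$ there produces $kk\bar k$, which creates no new descent under the convention $k=\bar k$ even though the corresponding insertion in the linear extension does change the slot structure. Without these rules the map neither lands in $\L(P_k)$ nor shifts the descent count by exactly one; this bookkeeping is the proof, not a routine verification to be deferred.

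For your second route, the identity to be established is $\sum_j b_{k,j}t^j=\sum_{i}(1-t)^i\sum_j a_{k,i,j}t^j$, i.e.
\begin{equation*}
b_{k,j}=\sum_{i=0}^{k}\sum_{l=0}^{i}(-1)^l\binom{i}{l}a_{k,i,j-l},
\end{equation*}
which carries alternating signs from the factor $(1-t)^i$. A positive ``sum over resolution choices,'' fibering Jacobi-Stirling permutations over Legendre-Stirling permutations, cannot produce such an identity. The missing idea is inclusion--exclusion over the patterns $u\bar u$: the alternating sum counts the Jacobi-Stirling permutations in $\bigcup_{S}\JSP_{k,S}$ with $j-1$ descents containing \emph{no} factor $u\bar u$ (via the bijection that deletes the $\bar u$ from each such factor, which lowers the descent count by one per deletion), and these pattern-free permutations are in descent-preserving bijection with Legendre-Stirling permutations of $M_k$ by reinserting each missing $\bar u$ immediately after the second $u$. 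You would need to supply this sign-reversing step explicitly; as written, your sketch only accounts for an unsigned sum and so cannot close the argument.
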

\begin{proof}[First proof]
Let $\LSP_k$ be the set of Legendre-Stirling permutations of $M_k$. 
By Theorem~\ref{thm:lsposet}, it suffices to 
construct  a bijection  $\psi: \LSP_k\to \L(P_k)$ such that  $\des\, \psi (\pi)-1=\des\, \pi$ for any $\pi\in \LSP_{k}$.
If $k=1$, then $\LSP_1=\{11\bar 1, \; {\bar 1}11\}$ and  $\L(P_1)$=\{132,\;312\}. We define $\psi$ by 
$$
\psi(11\bar{1})=132,\quad \psi(\bar{1}11)=312.
$$
Clearly $\des\,132-1=\des\, 11\bar{1}=0$ and $\des\,312-1=\des\, \bar{1}11=0$.
Suppose that the bijection $\psi: \LSP_{k-1}\to \L(P_{k-1})$ is constructed for some $k\geq 2$.
Given  $\pi\in \LSP_{k}$, we  
denote by $\pi'$ the word obtained by deleting $\bar{k}$ from $\pi$, and by $\pi''$ the word obtained by further deleting $kk$ from $\pi'$. We put $3k-1$ at the end of $\psi(\pi'')$ and obtain a word $\psi_1(\pi'')$. In the following two steps, the slot after $3k-1$ is excluded, because we cannot insert $3k$ and $3k-2$ to the right of $3k-1$.
\begin{itemize}
\item[a)]
if   $\bar{k}$ is in the $r$th descent  (or nondescent) slot of $\pi''$, then we insert $3k$ in the $r$th descent (or nondescent) slot of $\psi_1(\pi'')$ and obtain a word $\psi_2(\pi'')$;
\item[b)]
if  $kk$ is  in the $s$th descent slot or in the non-descent slot before $\bar{k}$ (in the $j$th non-descent slot other than the non-descent slot before $\bar{k}$) of $\pi'$, we define $\psi(\pi)$ by inserting  $3k-2$ in the $s$th descent slot or in the non-descent slot before $3k$ (in the $j$th non-descent slot other than the non-descent slot before $3k$) of $\psi_2(\pi'')$.
\end{itemize}

For example, we can compute $\psi(\red{\bar{2}}12233\red{\bar{3}}1\red{\bar{1}})=\red{6}147\red{9}\red{3}258$ by the following procedure:
\begin{align*}
&11\red{\bar{1}}\rightarrow\red{\bar{2}}11\red{\bar{1}}\rightarrow\red{\bar{2}}12\red{2}1\red{\bar{1}}\rightarrow\red{\bar{2}}122\red{\bar{3}}1\red{\bar{1}}\rightarrow\red{\bar{2}}12233\red{\bar{3}}1\red{\bar{1}}\\
&1\red{3}2\rightarrow\red{6}1\red{3}25\rightarrow\red{6}1\red{4}\red{3}25\rightarrow\red{6}14\red{9}\red{3}258\rightarrow\red{6}147\red{9}\red{3}258.
\end{align*}
This construction can be easily reversed and the number of descents is preserved.
\end{proof}

\begin{proof}[Second proof]
By \eqref{dp}, \eqref{e-f1}, and \eqref{eqa0}, we have
$$
\sum_{n=0}^\infty \JS(n+k, n; z) t^n =
\sum_{i=0}^k z^i \frac{\sum_{j=1}^{2k-i}a_{k,i,j}t^j}{(1-t)^{3k-i+1}}.
$$
Setting $z=1$ and using \eqref{defLS} gives
$$
\sum_{n=0}^\infty \LS(n+k,n) t^n =
  \sum_{i=0}^k (1-t)^i \frac{\sum_{j=1}^{2k-i}a_{k,i,j}t^j}{(1-t)^{3k+1}}.
$$
Multiplying both sides by $(1-t)^{3k+1}$ and applying \eqref{LSP1}   gives 
$$
\sum_{j=1}^{2k-1}b_{k,j}t^j  =
  \sum_{i=0}^k (1-t)^i \sum_{j=1}^{2k-i}a_{k,i,j}t^j,
$$
so
\begin{equation}\label{JS-LS} 
\sum_{i=0}^{k}\sum_{l=0}^{i}(-1)^l{i \choose l}a_{k,i,j-l}=b_{k,j}.
\end{equation}
For any $S\subseteq [\bar k]$,
let $\JSP_{k,S,j}$ be the set of all Jacobi-Stirling permutations of $M_{k,S}$ with $j-1$ descents.
Let $B_{k,j}=\bigcup_{S\subseteq [\bar k]}\JSP_{k,S,j}$ be the set of Jacobi-Stirling permutations with $j-1$ descents.
We show that the left-hand side of \eqref{JS-LS} is the number $N_0$ of permutations in $B_{k,j}$ with no pattern $u\bar u$.

For any $T\subseteq [\bar k]$, let $B_{k,j}(T,\geq)$ be the set of permutations in $B_{k,j}$ containing all the patterns $u\bar u$ for $\bar u\in T$.
By the principle of inclusion-exclusion~\cite[Chapter~2]{st}, 
\begin{align}\label{PIE}
N_0=\sum_{T\subseteq [\bar k]} (-1)^{|T|} |B_{k,j}(T,\geq)|.
\end{align}
Now, for any subsets $T, S\subseteq [\bar k]$ such that $T\subseteq [\bar k]\setminus S$, define the mapping  
$$
\varphi: \JSP_{k,S,j}\cap B_{k,j}(T,\geq)\to  \JSP_{k,S\cup T,j-|T|}
$$ 
by deleting the $\bar u$ in every pattern $u\bar u$ of $\pi\in \JSP_{k,S,j}\cap B_{k,j}(T,\geq)$. Clearly, this is a bijection.
Hence, we can rewrite \eqref{PIE} as
\begin{align*}
N_0&=\sum_{T\subseteq [\bar k]} (-1)^{|T|}\sum_{S, T\subseteq [\bar k]\atop T\cap S=\emptyset} |\JSP_{k,S\cup T,j-|T|}|\\
&=\sum_{T\subseteq [\bar k]} (-1)^{|T|}\sum_{S\subseteq [\bar k]\atop T\subseteq S} |\JSP_{k,S,j-|T|}|.
\end{align*}
For any subset $S$ of $[\bar k]$ with $|S|=i$, and any $l$ with $0\leq l\leq i$, 
there are ${i\choose l}$  subsets $T$ of $S$ such that $|T|=l$,
and, by definition,
$$
\sum_{S\subseteq [\bar k]\atop |S|=i} |\JSP_{k,S,j-|T|}|=a_{k,i,j-l}.
$$
This proves that $N_0$ is equal to the left-hand side of \eqref{JS-LS}.

Let $\LSP_{k,j}$ be the set of all Legendre-Stirling permutations of $M_k$ with $j-1$ descents. 
It is easy to identify  a permutation  $\pi\in B_{k,j}$ with no pattern $u\bar u$ with a Legendre-Stirling permutation 
$\pi'\in \LSP_{k,j}$ by inserting each missing $\bar u$ just to the right of the second $u$. 
 This  completes the proof.
\end{proof}

Finally,  the numerical experiments suggest  the following conjecture,  which has been verified for $0\leq i\leq k\leq 9$.
\begin{conj}
For  $0\leq i\leq k$,  the polynomial  $A_{k,i}(t)$ has  only real roots.
\end{conj}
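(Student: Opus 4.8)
\emph{Proof strategy (for the conjecture).} The plan is to prove by induction on $k$ a statement stronger than mere real-rootedness, namely that for each $k$ the whole family $A_{k,0},\dots,A_{k,k}$ consists of polynomials with only real roots in $(-\infty,0]$ and positive leading coefficients, and that a suitable bundle of interlacing relations holds among them (at the very least $A_{k,i+1}\preceq A_{k,i}$, since $\deg A_{k,i+1}=\deg A_{k,i}-1$ by Proposition~\ref{prop:1} and Corollary~\ref{cor7}). The engine is the observation that the five-term recurrence \eqref{th:7} can be repackaged, using the computation in the proof of Proposition~\ref{prop6}, as the single linear-operator identity
\begin{equation}\label{e-op}
A_{k,i}(t)=\mathcal{S}_{3k-i-1}\!\bigl(\mathcal{S}_{3k-i-2}(A_{k-1,i})+A_{k-1,i-1}\bigr),\qquad \mathcal{S}_m(g):=t\bigl((1-t)g'(t)+m\,g(t)\bigr),
\end{equation}
with the conventions $A_{0,0}=1$ and $A_{k-1,-1}=0$; one verifies \eqref{e-op} directly for small $k$ and in general by matching coefficients with \eqref{th:7}. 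Because everything is now built from the single operator $\mathcal{S}_m$, the inductive step is reduced to understanding how $\mathcal{S}_m$ acts on a real-rooted polynomial and on an interlacing pair.

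The first ingredient is a lemma on $\mathcal{S}_m$ in isolation: if $g$ has only real roots, all in $(-\infty,0]$, positive leading coefficient, and $m>\deg g$, then $\mathcal{S}_m(g)$ again has only real roots in $(-\infty,0]$, positive leading coefficient, and $\deg\mathcal{S}_m(g)=\deg g+1$. This is a logarithmic-derivative argument: writing $g=a\prod_j(t-r_j)$ with $r_j\le 0$, the non-monomial factor $(1-t)g'+mg$ vanishes exactly where $\sum_j\frac{1-r_j}{t-r_j}=\deg g-m$; since $1-r_j>0$ the left-hand side is strictly decreasing on each interval cut out by the $r_j$, which produces one root in every bounded gap between consecutive roots of $g$ (the multiplicities are easily checked to work out), one further root in $(-\infty,\min_j r_j)$ because $\deg g-m<0$, and none in $(\max_j r_j,\infty)$; this accounts for all $\deg g$ roots and places them in $(-\infty,0]$, and the extra factor $t$ contributes the root $0$. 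In both applications of $\mathcal{S}_m$ inside \eqref{e-op} the gap between the index of $\mathcal{S}$ and the degree of the argument equals $k\ge 1$, so the degree hypothesis always holds and no leading-term cancellation occurs in the inner sum.

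The heart of the matter is the propagation of interlacing, for which one needs: (i) a robust interlacing relation $f\preceq g$ (allowing $\deg g\in\{\deg f,\deg f+1\}$) within the Hermite--Kakeya--Obreschkoff framework of compatible polynomials, so that sums of interlacing polynomials are again real-rooted; (ii) that $\mathcal{S}_m$ preserves $\preceq$ under an index shift, and, more delicately, that $f\preceq g$ with $\deg g=\deg f+1$ forces $\mathcal{S}_m(f)$ and $g$ to interlace, so that the inner sum $\mathcal{S}_{3k-i-2}(A_{k-1,i})+A_{k-1,i-1}$ in \eqref{e-op} is real-rooted; and (iii) enough relations among $A_{k-1,0},\dots,A_{k-1,k-1}$ (consecutive interlacings $A_{k-1,i+1}\preceq A_{k-1,i}$, distance-two interlacings, and self-relations of the type $h\preceq\mathcal{S}_m(h)$) that the conclusions for the $A_{k,i}$ again form a system closed under \eqref{e-op}, since additively combining two interlacing pairs into one requires such cross-conditions. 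I expect step (ii) to be the genuine obstacle: real-rootedness of $\mathcal{S}_m(f)$ and of $g$ separately does not force their roots to alternate, because the analysis in step (i) distributes one root of each set into each of the same gaps, and controlling the order \emph{within} each gap requires a finer input---e.g.\ a sign computation of $\mathcal{S}_m(f)$ at the roots of $g$ (a Hermite--Biehler or Wronskian-type argument) or a continuity/deformation argument along a path of admissible polynomials. The remaining difficulty is bookkeeping: pinning down the exact minimal bundle of interlacing inequalities that is both implied by the base cases (readily read off from Table~\ref{Lnkz}) and self-reproducing under \eqref{e-op}.

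A possibly cleaner packaging of the same idea is to form the bivariate polynomial $\sum_{i=0}^{k}A_{k,i}(t)\,z^i$ and attempt to show it is stable in an appropriate half-plane sense: stability is preserved by a well-understood class of operators (Borcea--Br\"{a}nd\'{e}n calculus), \eqref{e-op} expresses the degree-$k$ object through the degree-$(k-1)$ one by such an operator, and real-rootedness of each $A_{k,i}(t)$ would then follow from the standard fact that the coefficient polynomials of a stable bivariate polynomial form a mutually interlacing family. The crux would shift to verifying the base-case stability and that the composite operator in \eqref{e-op} stays within the stability-preserving class; this seems to me the most promising route, though either way the essential content is the same interlacing/stability statement identified above.
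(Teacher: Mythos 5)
You should first be aware that this statement is presented in the paper as an open conjecture: the authors give no proof, only numerical verification for $k\le 9$ and the observation that the cases $i=0$ and $i=k$ follow from Brenti's theorem, because in those cases $A_{k,i}(t)=A_{k,S}(t)$ is the descent polynomial of a single set $\JSP_{k,S}$ of generalized Stirling permutations. So there is no paper proof to compare against, and a complete argument would be a genuine contribution. That said, your operator reformulation is correct: with $\mathcal{S}_m(g)=t\bigl((1-t)g'(t)+mg(t)\bigr)$ one has $t\frac{d}{dt}\bigl(g(t)(1-t)^{-m}\bigr)=\mathcal{S}_m(g)(1-t)^{-(m+1)}$, and combining this with the functional equation in the proof of Proposition~\ref{prop6} does give $A_{k,i}=\mathcal{S}_{3k-i-1}\bigl(\mathcal{S}_{3k-i-2}(A_{k-1,i})+A_{k-1,i-1}\bigr)$; the degree bookkeeping also checks out, since $m-\deg$ equals $k\ge 1$ in both applications and the two summands inside have equal degree $2k-i-1$ with positive leading coefficients. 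Your lemma that $\mathcal{S}_m$ preserves real-rootedness with roots in $(-\infty,0]$ when $m>\deg g$ is a correct standard logarithmic-derivative argument.

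However, what you have written is a strategy, not a proof, and the step you yourself flag as ``the genuine obstacle'' is precisely the crux and remains entirely open. Real-rootedness of $\mathcal{S}_{3k-i-2}(A_{k-1,i})$ and of $A_{k-1,i-1}$ separately says nothing about their sum: you need these two polynomials to interlace (or to admit a common interleaver), and neither your argument nor anything in the paper supplies the self-reproducing bundle of interlacing relations that would close the induction. Note that the paper already has a decomposition of $A_{k,i}$ as a sum $\sum_{|S|=i}A_{k,S}(t)$ of real-rooted polynomials (each real-rooted by Brenti, or by Haglund--Visontai) and explicitly remarks that it is not apparent how to upgrade this to real-rootedness of the sum; your recurrence-based decomposition trades one such sum for another but faces the identical difficulty. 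Likewise, your ``cleaner packaging'' via stability of $\sum_{i}A_{k,i}(t)z^i$ is essentially the Haglund--Visontai/Borcea--Br\"and\'en route the authors allude to, and verifying the base-case stability together with stability-preservation of the composite operator is again exactly the unproven content. Until your step (ii) is actually established, the conjecture remains unproved; what you have is a plausible and correctly set-up framework in which to attack it.
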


Note that by a classical result~\cite[p. 141]{co},  the above conjecture would imply that the sequence $a_{k,i,1}, \ldots, a_{k,i,2k-i}$ is unimodal.
Let $G_k$ be the multiset $\{1^{m_1}, 2^{m_2},\ldots, k^{m_k}\}$ with  $m_i\in\N$. A permutations $\pi$ of $G_k$  is a \emph{generalized Stirling permutation}  (see  \cite{br, ja}) if whenever $u< v < w$ and $\pi(u)=\pi(w)$, we have $\pi(v)>\pi(u)$.
 For any $S\subseteq [\bar{k}]$,  the set of generalized Stirling permutations of $M_k\setminus S$ is equal to $\JSP_{k,S}$. 
By Lemma~\ref{lea} and Theorem~\ref{main1},  the descent polynomial of $\JSP_{k,S}$ is $A_{k,S}(t)$. 
It follows from a result of  Brenti~\cite[Theorem 6.6.3]{br} that 
$A_{k,S}(t)$ has only real roots.  By  \eqref{eq:AKI=AKS}, 
this implies, in particular, that the above conjecture is true for $i=0$ and $i=k$.

 One can also use the methods of 
Haglund and Visontai~\cite{hv}  to show that $A_{k,S}(t)$ has only real roots, though it is not apparent how to use these methods to show that 
$A_{k,i}(t)$ has  only real roots.


\end{document}